\newtheorem{theorem}{Theorem}[section]
\newtheorem{corollary}{Corollary}[section]
\newtheorem{remark}{Remark}[section]
\newtheorem{definition}{Definition}[section]
\newenvironment{proof}{\textit{Proof}:}{\hfill$\square$}
\numberwithin{equation}{section}
\begin{document}

	\title{A semi-symmetric metric connection, perfect fluid space-time and phantom barrier}
	\date{}
	\author{\textbf{Miroslav D. Maksimovi\'c\footnote{Corresponding: miroslav.maksimovic@pr.ac.rs}, Milan Lj. Zlatanovi\'c, Marija S. Najdanovi\'c}}
	\maketitle

	\begin{abstract}
		{\small We consider a concircularly semi-symmetric metric connection and its application. The Ricci tensors with respect to the concircularly semi-symmetric metric connection are symmetric, and they are used to define Einstein type manifolds. In this way, conditions under which a pseudo-Riemannian manifold is quasi-Einstein are obtained. On a Lorentzian manifold, a concircularly semi-symmetric metric connection with a unit timelike generator becomes a semi-symmetric metric $P$-connection, and a Lorentzian manifold becomes a GRW space-time. { It is proven in which case the scalar curvature of a perfect fluid space-time with that connection is constant and what its value is, which implies a reduction to Einstein manifold. } Furthermore, an application to the theory of relativity is presented, and the value of the equation of state is examined. It is ultimately shown that the equation of state in a  perfect fluid space-time that satisfies Einstein field equation with cosmological constant and admits a unit timelike torse-forming vector represents a phantom barrier.}
		
		\vskip0.25cm
		\noindent\textbf{Keywords}: Lorentzian manifold, semi-symmetric connection, space-time, perfect fluid, GRW space-time, phantom barrier.
		
		\vskip0.25cm
		\noindent\textbf{MSC 2020}: 53B05, 53B30, 53B50, 53C25, 83C05.
	\end{abstract}

	\section{Introduction and motivation}
	
	On a pseudo-Riemannian manifold $(\mathcal{M},g)$, an arbitrary non-zero vector $P$ is called \textit{timelike}, \textit{isotropic} (\textit{null}, \textit{lightlike}) and \textit{spacelike} if $g(P,P)<0$, $g(P,P)=0$ and $g(P,P)>0$, respectively. A zero vector $P=\vec{0}$ is spacelike.
	
	A \textit{torse-forming vector field} $P$ is a vector that satisfies the relation
	\begin{equation}\label{eq:torse-forming}
		\overset{g}{\nabla}_{X} P= \omega X + \eta(X)P,
	\end{equation}
	where $\eta$ is an arbitrary 1-form and $\omega$ is a scalar function \cite{yano1944}. {Using a torse-forming vector, almost geodesic mapping of the third type is defined (see pp. 168. in \cite{sinyukov1979})}. Depending on the properties of $\eta$ and $\omega$, we have the following special cases of the torse-forming vector $P$ (see \cite{chaubey2022}):
	\begin{enumerate}[label=\textup{(\roman*)}, itemsep=0.5em, leftmargin=*, font=\normalfont]
		\item \textit{torqued vector field} if $\eta(P)=0$,
		\item \textit{concircular vector field} (in Fialkow's sense) if $\eta=0$,
		\item \textit{concircular vector field} (in Yano's sense) if the 1-form $\eta$ is closed,
		\item \textit{recurrent vector field} if $\omega=0$,
		\item \textit{concurrent vector field} if $\eta=0$ and $\omega=1$,
		\item \textit{parallel vector field} if $\eta=0$ and $\omega=0$.
	\end{enumerate}

	Let $\pi$ be a 1-from associated with vector field $P$, i.e. $\pi(\cdot)=g(\cdot,P)$. If $P$ is a unit vector on a pseudo-Riemannian manifold, i.e. $g(P,P)=\epsilon=\pm 1$, then equation (\ref{eq:torse-forming}) takes the form (see \cite{yano1944})
	\begin{equation}\label{eq:torse-formingunit}
		(\overset{g}{\nabla}_{X} \pi )(Y)= \omega (g(X,Y) -\epsilon \pi(X)\pi(Y)).
	\end{equation}
	The case for a unit spacelike vector, i.e. $g(P,P)=1$, was studied in \cite{chaubey2022} and such a vector is concircular in Yano's sense. On the other hand, in \cite{deshmukh2021} such a vector is called an \textit{anti-torqued vector}, because the vectors $U$ and $P$ are parallel, i.e. $U=-\omega P$, where $U$ is an associated vector for the 1-form $\eta$, i.e. $\eta(\cdot)=g(\cdot,U)$. For $\omega\in\mathbb{R}$, in \cite{naik2021} such a vector is called \textit{concurent-recurrent}.
	
	A vector field $P$ is said to be \textit{conformal} (or \textit{conformal-Killing}) if it satisfies equation
	\begin{equation*}
		{\mathcal{L}}_{P} g = 2\varphi g,
	\end{equation*}
	where $\mathcal{L}$ denotes Lie derivative with respect to the Levi-Civita connection, and $\varphi$ is a scalar function. If $\varphi=0$ then $P$ is called \textit{Killing vector}. A vector field $P$ is said to be \textit{geodesic} if it satisfies equation 
	\begin{equation*}
		\overset{g}{\nabla}_{P} P=0.
	\end{equation*}
	
	If for a unit timelike vector $P$, i.e. $g(P,P)=-1$, we define a (1,1)-tensor $\phi X= \frac{1}{\omega} \overset{g}{\nabla}_{X} P$, then an $n$-dimensional manifold  $\mathcal{M}$ with the structure $(g,\pi,P,\phi)$ that satisfies equation $\overset{g}{\nabla} \pi = \omega (g + \pi\otimes\pi)$ is called \textit{Lorentzian concircular structure manifold} \cite{shaikh2003}. 
	{The manifold that allows a unit torse-forming vector \eqref{eq:torse-formingunit} has the metric of the form (see pp. 193. in \cite{sinyukov1979})
		$$ds^2=\epsilon (dx^1)^2 + f(x^1,x^2,\dots, x^n)d{s^*}^2,$$
		where $f$ is a non-zero function and  $d{s^*}^2 (x^2,\dots, x^n)$ is the metric of an $(n-1)$-dimensional  Riemannian submanifold. For special cases of a  torse-forming vector, we have special cases of the function $f$ (see \cite{mikes2000}) . 
	}
	
	In this paper, we will study a semi-symmetric metric connection whose generator is a concircular vector field in Yano's sense, that is, we will consider the so-called concircularly semi-symmetric metric connection, defined in paper \cite{slosarska1984}. In the paper \cite{mpvz2023}, the properties of curvature tensors with respect to this connection were examined, and tensors independent of its generator were determined. The paper \cite{mzv2025} focuses on the application of this connection to Lorentzian manifolds.


	Generalized-Roberston Walker (GRW) space-time is a special case of Lorentzian manifolds. After B. Y. Chen \cite{chen2014} characterized GRW space-time using a timelike concircular vector (in Fialkow's sense), three years later, C. A. Mantica and L. G. Molinari \cite{manticamolinari2017} characterized  GRW space-time using a unit timelike torse-forming vector field, and in paper \cite{manticamolinari2019} they proved that a Lorentzian concircular structure manifold coincides with GRW space-time. These earlier works provided a new perspective on GRW space-time, particularly through the study of the aforementioned vector fields. Consequently, recently, the study of Lorentzian manifolds with a torse-forming vector field has become very attractive, leading to the discovery of several interesting results (for example, see \cite{ucd2022,blaga2020,siddiqi2022,siddiqi2019}). {Considering that a concircular vector in Yano's sense is a special case of a torse-forming vector, the previous results motivated us to apply the concircularly semi-symmetric metric connection to Lorentzian manifolds. } A Lorentzian manifold with a concircularly semi-symmetric metric connection reduces to GRW space-time, and the mentioned connection reduces to a semi-symmetric metric $P$-connection when its generator is a unit timelike vector \cite{mzv2025}.
	
	Here, we first consider a pseudo-Riemannian manifold equipped with a concircularly semi-symmetric metric connection, and then focus on Lorentzian manifolds, specifically Generalized Robertson–Walker space-times. Motivated by the fact that in a GRW space-time the three curvature tensors, as well as the corresponding Ricci tensors, cannot vanish, in paper \cite{mzv2025} we studied various symmetries of such manifolds with a semi-symmetric metric $P$-connection. Therefore, in this paper, we have considered special manifolds in which the Ricci tensors are proportional to the metric $g$, and in this way we established new conditions for the quasi-Einstein manifold.  We examined the application of this connection to the theory of relativity and found that the equation of state represents a phantom barrier when the perfect fluid space-time satisfies the Einstein field equation with respect to the Levi-Civita connection and with a cosmological constant.
		
		The papers \cite{csillag2024,kranas2019} study the physical properties and application of connections with torsion to the theory of relativity, but unlike this paper, \cite{csillag2024} considers generalized Einstein field equations, and \cite{kranas2019} studies Einstein-Cartan field equations. Given that Lorentzian manifolds with concircularly semi-symmetric metric connection satisfy the equation of a unit torse-forming vector, which is defined only in terms of the Levi-Civita connection, in this paper we have considered Einstein field equations only in relation to the Levi-Civita connection, with the condition that they admit a unit torse-forming vector.

	\section{Concircularly semi-symmetric metric connection}

	Let $(\mathcal{M},g)$ be an $n$-dimensional pseudo-Riemannian manifold $(n>2)$. A \textit{semi-symmetric metric connection} is a connection whose torsion tensor is given by equation
	\begin{equation*}
		\overset{1}{T}(X,Y)= \pi(Y) X - \pi(X)Y,
	\end{equation*} 
	and which satisfies condition
	\begin{equation*}
		\overset{1}{\nabla}g=0.
	\end{equation*}
	This connection is given by equation \cite{pak1969}
	\begin{equation}\label{eq:SSmc}
		\overset{1}{\nabla}_{X} Y = \overset{g}{\nabla}_{X} Y + \pi(Y) X - g (X,Y) P,
	\end{equation}
	where $\pi$ is a covector, $P$ its associated vector field such that $\pi(\cdot)=g(\cdot,P)$ and $\overset{g}{\nabla}$ denotes the Levi-Civita connection.
	
	A \textit{concircularly semi-symmetric metric connection} \cite{slosarska1984} is a special class of a semi-symmetric metric connection whose generator $\pi$ satisfies relation
	\begin{equation}\label{eq:conditionforconcircularmapping}
		(\overset{g}{\nabla}_{X} \pi )(Y) - \pi(X)\pi(Y)=\omega g(X,Y),
	\end{equation}
	where $\omega$ is a scalar function. The previous equation gives
	\begin{equation*}
		\omega = \frac{1}{n}(\mathrm{div}P - \pi(P)).
	\end{equation*}
	
	It is clear that the 1-form $\pi$ satisfying condition (\ref{eq:conditionforconcircularmapping}) is closed. By comparing the equations (\ref{eq:torse-forming}) and (\ref{eq:conditionforconcircularmapping}) we see that they are equal for $\eta=\pi$, which means that the vector satisfying the condition (\ref{eq:conditionforconcircularmapping}) is a special case of a torse-forming vector, i.e. it is a concircular vector in Yano's sense. In \cite{blaga2024} such a vector is called \textit{self-torse-forming}. 

	The covariant derivative of $P$ with respect to a concircularly semi-symmetric metric connection satisfies the following equation 
	\begin{equation}\label{eq:nabla1XP}
		\overset{1}{\nabla}_{X} P = (\omega + \pi(P))X.
	\end{equation}
	
	A semi-symmetric metric connection $\overset{1}{\nabla}$ which satisfies $\overset{1}{\nabla} P=0$ is called a semi-symmetric metric $P$-connection \cite{chaubey2019}. The previous equation shows that a concircularly semi-symmetric metric connection becomes a semi-symmetric metric $P$-connection if and only if $\omega =-g(P,P)$ \cite{mzv2025}. 
	
	From equations (\ref{eq:SSmc}) and (\ref{eq:conditionforconcircularmapping}) we obtain the following statement.
	
	\begin{theorem}\label{thrm21}
		For a concircularly semi-symmetric metric connection the following relations hold
		\begin{enumerate}[label=\textup{(\roman*)}, itemsep=0.5em, leftmargin=*, font=\normalfont]
			\item \(\displaystyle \overset{1}{\nabla}_{P} Y = \overset{g}{\nabla}_{P} Y,\)
			\item \(\displaystyle \pi(\overset{1}{T}(X,Y)) = 0,\)
			\item \(\displaystyle (\overset{g}{\nabla}_{X} \pi)(P) = (\overset{g}{\nabla}_{P} \pi)(X) = \pi(\overset{g}{\nabla}_{P} X) = (\omega + \pi(P)) \pi(X),\)
			\item \(\displaystyle (\mathcal{L}_{P} \pi)(X) = 2(\omega + \pi(P)) \pi(X),\)
			\item \(\displaystyle (\mathcal{L}_{P} g)(X,Y) = 2 (\overset{g}{\nabla}_{X} \pi)(Y).\)
		\end{enumerate}
	\end{theorem}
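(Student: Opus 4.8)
The plan is to obtain all five identities by unwinding the two structural equations of the connection. From \eqref{eq:SSmc} I have the explicit formula for $\overset{1}{\nabla}$ and the prescribed torsion $\overset{1}{T}(X,Y)=\pi(Y)X-\pi(X)Y$; from \eqref{eq:conditionforconcircularmapping}, together with $\pi(\cdot)=g(\cdot,P)$ and the metric compatibility $\overset{g}{\nabla}g=0$, I get the self-torse-forming identity $\overset{g}{\nabla}_{X}P=\omega X+\pi(X)P$, which is what I will actually compute with. Throughout I use $g(X,P)=\pi(X)$, $g(P,P)=\pi(P)$, and the fact that $\pi$ is closed — already observed in the text, and in any case immediate by antisymmetrizing \eqref{eq:conditionforconcircularmapping}. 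Items (i) and (ii) are one-liners: setting $X=P$ in \eqref{eq:SSmc} makes the two correction terms $\pi(Y)P-g(P,Y)P$ cancel, giving $\overset{1}{\nabla}_{P}Y=\overset{g}{\nabla}_{P}Y$; and applying $\pi$ to $\overset{1}{T}(X,Y)$ gives $\pi(Y)\pi(X)-\pi(X)\pi(Y)=0$.

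For (iii), the mechanism is that metric compatibility identifies the covariant derivative of the $1$-form $\pi$ with the flat of the covariant derivative of $P$: $(\overset{g}{\nabla}_{X}\pi)(Z)=g(\overset{g}{\nabla}_{X}P,Z)$, and in particular $\overset{g}{\nabla}_{P}\pi=(\overset{g}{\nabla}_{P}P)^{\flat}$. Specializing $\overset{g}{\nabla}_{X}P=\omega X+\pi(X)P$ at $X=P$ gives $\overset{g}{\nabla}_{P}P=(\omega+\pi(P))P$. Substituting these two facts into the expressions occurring in (iii) and using $g(X,P)=\pi(X)$, $g(P,P)=\pi(P)$, each collapses to $(\omega+\pi(P))\pi(X)$; the several equalities asserted there are then the same computation read with $\pi$ and $\overset{g}{\nabla}$ interchanged by metric compatibility.

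For (v) I will use the Koszul-type identity $(\mathcal{L}_{P}g)(X,Y)=(\overset{g}{\nabla}_{X}\pi)(Y)+(\overset{g}{\nabla}_{Y}\pi)(X)$, valid for any vector field with metrically dual $1$-form $\pi$ (it follows from expanding the Lie derivative and the torsion-freeness of $\overset{g}{\nabla}$); since $\pi$ is closed, $(\overset{g}{\nabla}_{X}\pi)(Y)=(\overset{g}{\nabla}_{Y}\pi)(X)$, so the right-hand side doubles to $2(\overset{g}{\nabla}_{X}\pi)(Y)$. For (iv) I will expand $(\mathcal{L}_{P}\pi)(X)=P(\pi(X))-\pi([P,X])$, write $[P,X]=\overset{g}{\nabla}_{P}X-\overset{g}{\nabla}_{X}P$ and $P(\pi(X))=(\overset{g}{\nabla}_{P}\pi)(X)+\pi(\overset{g}{\nabla}_{P}X)$; the $\pi(\overset{g}{\nabla}_{P}X)$ terms cancel and what remains is $(\overset{g}{\nabla}_{P}\pi)(X)+\pi(\overset{g}{\nabla}_{X}P)$, which by (iii) equals $2(\omega+\pi(P))\pi(X)$. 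Alternatively (iv) is (v) evaluated at $Y=P$, after the identity $(\mathcal{L}_{P}g)(X,P)=(\mathcal{L}_{P}\pi)(X)$, itself a consequence of $\pi=g(\cdot,P)$ and $\mathcal{L}_{P}P=0$.

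None of this presents a real obstacle — every step is a direct substitution. If anything deserves attention, it is the bookkeeping in (iv) and (v): keeping the correct argument slot for $P$ when commuting it through $g$ via $\overset{g}{\nabla}g=0$, getting the sign right in $[P,X]=\overset{g}{\nabla}_{P}X-\overset{g}{\nabla}_{X}P$, and recalling that the closedness of $\pi$ invoked in (v) is not an extra hypothesis but a by-product of \eqref{eq:conditionforconcircularmapping}.
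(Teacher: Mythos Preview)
Your argument follows the paper's exactly: for (v) you invoke the same identity $(\mathcal{L}_{P}g)(X,Y)=(\overset{g}{\nabla}_{X}\pi)(Y)+(\overset{g}{\nabla}_{Y}\pi)(X)$ and then use the symmetry of $\overset{g}{\nabla}\pi$ built into \eqref{eq:conditionforconcircularmapping} (equivalently, the closedness of $\pi$), while for (i)--(iv) the paper merely declares the relations ``easy to prove'' and you supply the natural direct computations. So method-wise there is nothing to distinguish.

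One caution on (iii), however: the term $\pi(\overset{g}{\nabla}_{P}X)$ does \emph{not} collapse to $(\omega+\pi(P))\pi(X)$ by the substitutions you describe. Metric compatibility gives
\[
\pi(\overset{g}{\nabla}_{P}X)=P\bigl(g(X,P)\bigr)-g(X,\overset{g}{\nabla}_{P}P)=P(\pi(X))-(\omega+\pi(P))\pi(X),
\]
which depends on the directional derivative $P(\pi(X))$ and is not a pointwise expression in $\pi(X)$; for a generic vector field $X$ the asserted equality fails. The expression your computation \emph{does} handle is $\pi(\overset{g}{\nabla}_{X}P)$, since $\overset{g}{\nabla}_{X}P=\omega X+\pi(X)P$ gives $(\omega+\pi(P))\pi(X)$ immediately, and this is almost certainly what is intended. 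You should flag this as a likely misprint in the statement rather than asserting that ``each collapses''; as written, that step would not go through.
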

	
	\begin{proof}
		Using the well-known equation for the Lie derivative of the metric $g$ with respect to the Levi-Civita connection, i.e.
		\begin{equation*}
			(\mathcal{L}_{P} g)(X,Y) = (\overset{g}{\nabla}_{X} \pi)(Y) + (\overset{g}{\nabla}_{Y} \pi)(X),  
		\end{equation*}
		and  using condition (\ref{eq:conditionforconcircularmapping}) we obtain the last identity. The other relations are easy to prove.
		
	\end{proof}

	The Lie derivative of metric $g$ with respect to a non-symmetric connection $\overset{1}{\nabla}$ is defined by equation 
	\begin{align*}
		(\overset{1}{\mathcal{L}}_{P} g) (X,Y) & = Pg(X,Y) - g(\overset{1}{\nabla}_{P}X - \overset{1}{\nabla}_{X} P,Y) - g(X, \overset{1}{\nabla}_{P}Y -  \overset{1}{\nabla}_{Y} P) \\
		& = (\overset{1}{\nabla}_{P} g)(X,Y) + g(\overset{1}{\nabla}_{X} P,Y) + g(X, \overset{1}{\nabla}_{Y} P),
	\end{align*}
	and for a concircularly semi-symmetric metric connection $\overset{1}{\nabla}$ we have
	\begin{equation*}
		(\overset{1}{\mathcal{L}}_{P} g) (X,Y) = 0 + g((\omega + \pi(P))X,Y) + g(X, (\omega + \pi(P))Y),
	\end{equation*}
	where we take into consideration equation (\ref{eq:nabla1XP}). Further, we obtain
	\begin{equation*}
		\overset{1}{\mathcal{L}}_{P} g = 2 (\omega + \pi(P)) g,
	\end{equation*}
	from which the following statements follow.
	\begin{theorem}
		A vector field $P$ is conformal with respect to a concircularly semi-symmetric metric connection.
	\end{theorem}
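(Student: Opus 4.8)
The plan is to obtain the statement directly from the computation of $\overset{1}{\mathcal{L}}_{P} g$ carried out just above it, and then to recognize the resulting identity as the conformal equation read with respect to $\overset{1}{\nabla}$. First I would recall the expression for the Lie derivative of $g$ along $P$ with respect to the non-symmetric connection,
\[
(\overset{1}{\mathcal{L}}_{P} g)(X,Y) = (\overset{1}{\nabla}_{P} g)(X,Y) + g(\overset{1}{\nabla}_{X} P, Y) + g(X, \overset{1}{\nabla}_{Y} P),
\]
and discard the first term on the right using that $\overset{1}{\nabla}$ is a metric connection, i.e. $\overset{1}{\nabla} g = 0$.

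Next I would substitute equation (\ref{eq:nabla1XP}), namely $\overset{1}{\nabla}_{X} P = (\omega + \pi(P)) X$, into the two remaining terms, each of which then becomes $(\omega + \pi(P)) g(X,Y)$. This yields $\overset{1}{\mathcal{L}}_{P} g = 2(\omega + \pi(P)) g$. Setting $\varphi := \omega + \pi(P)$ and noting that $\omega$ is a scalar function and $\pi(P) = g(P,P)$ is a scalar function, so that $\varphi$ is a scalar function on $\mathcal{M}$, this last identity is precisely the conformal condition $\mathcal{L}_{P} g = 2\varphi g$ with the Levi-Civita connection replaced by $\overset{1}{\nabla}$. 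Hence $P$ is conformal with respect to the concircularly semi-symmetric metric connection.

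I expect essentially no obstacle here: the formula for $\overset{1}{\mathcal{L}}_{P} g$, the metricity $\overset{1}{\nabla} g = 0$, and equation (\ref{eq:nabla1XP}) are all already in hand, so the argument is a two-line substitution. The only point I would flag is conceptual rather than technical, namely that being conformal requires only that the proportionality factor be a function; in particular $P$ need not be Killing for $\overset{1}{\nabla}$, and it is Killing exactly when $\omega + \pi(P) = 0$, which is the semi-symmetric metric $P$-connection case noted after (\ref{eq:nabla1XP}).
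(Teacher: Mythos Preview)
Your proposal is correct and follows essentially the same approach as the paper: the paper computes $(\overset{1}{\mathcal{L}}_{P} g)(X,Y)$ from the displayed formula, uses $\overset{1}{\nabla} g = 0$ to drop the first term, substitutes equation (\ref{eq:nabla1XP}) into the remaining two terms, and reads off $\overset{1}{\mathcal{L}}_{P} g = 2(\omega + \pi(P)) g$. Your remark about the Killing case $\omega + \pi(P) = 0$ is exactly the content of the corollary that immediately follows in the paper.
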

	\begin{corollary}
		A vector field $P$ is Killing with respect to a concircularly semi-symmetric metric connection if and only if $\omega =-g(P,P)$.
	\end{corollary}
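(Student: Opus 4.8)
The plan is to read off the conclusion directly from the formula $\overset{1}{\mathcal{L}}_{P} g = 2(\omega + \pi(P))\,g$ derived immediately above the statement. By definition, $P$ is Killing with respect to the concircularly semi-symmetric metric connection $\overset{1}{\nabla}$ precisely when $\overset{1}{\mathcal{L}}_{P} g$ vanishes identically. So the whole argument reduces to analysing when the right-hand side $2(\omega + \pi(P))\,g$ is the zero tensor.

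First I would invoke non-degeneracy: since $(\mathcal{M},g)$ is pseudo-Riemannian, $g$ is nowhere the zero bilinear form, so $2(\omega+\pi(P))\,g \equiv 0$ holds if and only if the scalar function $\omega + \pi(P)$ vanishes everywhere. Then I would substitute $\pi(P) = g(P,P)$ (from $\pi(\cdot)=g(\cdot,P)$) to rewrite this condition as $\omega = -g(P,P)$. Conversely, if $\omega = -g(P,P) = -\pi(P)$, the scalar factor is identically zero and hence $\overset{1}{\mathcal{L}}_{P} g = 0$, so $P$ is Killing with respect to $\overset{1}{\nabla}$. This establishes both implications.

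There is essentially no real obstacle here; the only point that needs to be stated explicitly is the cancellation of the metric factor, which relies on $g$ being non-degenerate (so that a scalar multiple of $g$ vanishes iff the scalar does). As a sanity check, one may note the consistency with the earlier remark that $\overset{1}{\nabla} P = 0$, i.e.\ that $\overset{1}{\nabla}$ becomes a semi-symmetric metric $P$-connection, exactly when $\omega = -g(P,P)$: a parallel vector field is automatically Killing, and here the converse also holds because $\overset{1}{\nabla}_X P = (\omega+\pi(P))X$ by \eqref{eq:nabla1XP}, so both properties are governed by the single scalar $\omega+\pi(P)$. I would close the proof with this brief remark rather than a separate computation.
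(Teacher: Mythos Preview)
Your proposal is correct and follows exactly the paper's approach: the corollary is stated as an immediate consequence of the formula $\overset{1}{\mathcal{L}}_{P} g = 2(\omega + \pi(P))\,g$, and your argument---that this vanishes iff the scalar $\omega+\pi(P)=\omega+g(P,P)$ vanishes, by non-degeneracy of $g$---is precisely what the paper leaves implicit.
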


	We initiated  investigation of a concircularly semi-symmetric metric connection in \cite{mpvz2023,mzv2025}, where we examined the properties of six linearly independent curvature tensors, given by the following relations (see e.g. \cite{zlatanovic2021})
	\begin{align*}
		\overset{1}{R} (X,Y)Z = & \overset{1}{\nabla}_{X}\overset{1}{\nabla}_{Y} Z - \overset{1}{\nabla}_{Y}\overset{1}{\nabla}_{X} Z - \overset{1}{\nabla}_{[X,Y]} Z, \\
		\overset{0}{R} (X,Y)Z  = & \overset{1}{R} (X,Y)Z -\frac{1}{2}(\overset{1}{\nabla}_{X} \overset{1}{T})(Y,Z) + \frac{1}{2}(\overset{1}{\nabla}_{Y} \overset{1}{T})(X,Z)  -\frac{1}{4} \underset{XYZ}{\mathfrak{S}}  \overset{1}{T}( \overset{1}{T}(X,Y),Z) - \frac{1}{4}  \overset{1}{T}( \overset{1}{T}(X,Y),Z),
		\\
		\overset{2}{R} (X,Y)Z  = & \overset{1}{R} (X,Y)Z -(\overset{1}{\nabla}_{X} \overset{1}{T})(Y,Z) + (\overset{1}{\nabla}_{Y} \overset{1}{T})(X,Z) - \underset{XYZ}{\mathfrak{S}} \overset{1}{T}( \overset{1}{T}(X,Y),Z), \\ 
		\overset{3}{R} (X,Y)Z  = & \overset{1}{R} (X,Y)Z  + (\overset{1}{\nabla}_{Y} \overset{1}{T})(X,Z), \\
		\overset{4}{R} (X,Y)Z  = & \overset{1}{R} (X,Y)Z  + (\overset{1}{\nabla}_{Y} \overset{1}{T})(X,Z) -   \overset{1}{T}( \overset{1}{T}(X,Y),Z), \\
		\begin{split}
			\overset{5}{R} (X,Y)Z  = & \overset{1}{R} (X,Y)Z -\frac{1}{2}(\overset{1}{\nabla}_{X} \overset{1}{T})(Y,Z) + \frac{1}{2}(\overset{1}{\nabla}_{Y} \overset{1}{T})(X,Z)  -\frac{1}{2} \underset{XYZ}{\mathfrak{S}}  \overset{1}{T}( \overset{1}{T}(X,Y),Z) + \frac{1}{2}  \overset{1}{T}( \overset{1}{T}(Z,X),Y).
		\end{split}
	\end{align*}
	where \(\underset{XYZ}{\mathfrak{S}}\) denotes the cyclic sum of the vectors \(X,Y,Z\). By contracting with respect to vector field $X$ in the previous equations, we obtain the corresponding Ricci tensors $\overset{\theta}{R}ic$, $\theta=0,1,\dots,5$, and $\overset{g}{R}ic$, where $\overset{g}{R}ic$ denotes Ricci tensor with respect to the Levi-Civita connection, and here we will continue with the study of the properties of these Ricci tensors.
	
	\begin{theorem}\cite{mpvz2023}
		Let $(\mathcal{M},g,\overset{1}{\nabla})$ be a pseudo-Riemannian manifold with a concircularly semi-symmetric metric connection.
		The Ricci tensors $\overset{\theta}{R}ic$, $\theta=0,1,2,\ldots,5$, and the Ricci tensor $\overset{g}{R}ic$ are related by equations
		\begin{align}
			\label{eq:Ric0css}
			\overset{0}{R}ic  = &  \overset{g}{R}ic  -\frac{n-1}{2}  (3\omega + \pi(P)) g - \frac{n-1}{4}\pi\otimes\pi,
			\\
			\label{eq:Ric1css}
			\overset{1}{R}ic = & \overset{g}{R}ic - (n-1)(2\omega +\pi(P)) g,
			\\
			\label{eq:Ricalphacss}
			\overset{\alpha}{R}ic = & \overset{g}{R}ic - (n-1)\omega g, \;\; \alpha=2,3,
			\\
			\label{eq:Ric4css}
			\overset{4}{R}ic = & \overset{g}{R}ic - (n-1) \omega  g - (n-1)\pi\otimes\pi,
			\\
			\label{eq:Ric5css}
			\overset{5}{R}ic = & \overset{g}{R}ic- \frac{n-1}{2} (3\omega + \pi(P) )  g - \frac{n-1}{2}\pi\otimes\pi.
		\end{align}
	\end{theorem}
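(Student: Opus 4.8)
The plan is to compute three elementary ingredients — the curvature operator $\overset{1}{R}$ of the connection, the covariant derivative $\overset{1}{\nabla}\overset{1}{T}$ of its torsion, and the iterated torsion $\overset{1}{T}(\overset{1}{T}(\cdot,\cdot),\cdot)$ — and then read off all five identities by substituting into the six defining formulas for $\overset{0}{R},\ldots,\overset{5}{R}$ and contracting over the first argument. \textbf{Step 1 (the operator $\overset{1}{R}$).} Writing the deformation tensor $B(X,Y):=\overset{1}{\nabla}_{X}Y-\overset{g}{\nabla}_{X}Y=\pi(Y)X-g(X,Y)P$ and using the classical expansion
$$\overset{1}{R}(X,Y)Z=\overset{g}{R}(X,Y)Z+(\overset{g}{\nabla}_{X}B)(Y,Z)-(\overset{g}{\nabla}_{Y}B)(X,Z)+B(X,B(Y,Z))-B(Y,B(X,Z)),$$
I would substitute $B$, replace $\overset{g}{\nabla}_{X}\pi$ by the right-hand side of \eqref{eq:conditionforconcircularmapping} (equivalently use $\overset{g}{\nabla}_{X}P=\omega X+\pi(X)P$), and simplify. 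A pleasant feature is that every $P$-directed term and every $\pi\otimes\pi$ term cancels, leaving the clean identity
$$\overset{1}{R}(X,Y)Z=\overset{g}{R}(X,Y)Z+(2\omega+\pi(P))\bigl(g(X,Z)Y-g(Y,Z)X\bigr).$$
Contracting the first slot, with the convention $\overset{\theta}{R}ic(Y,Z)=\mathrm{tr}\bigl(X\mapsto\overset{\theta}{R}(X,Y)Z\bigr)$, gives \eqref{eq:Ric1css} at once.

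\textbf{Step 2 (the torsion terms).} From \eqref{eq:nabla1XP} one obtains $(\overset{1}{\nabla}_{X}\pi)(Y)=(\omega+\pi(P))g(X,Y)$; since $\overset{1}{T}(Y,Z)=\pi(Z)Y-\pi(Y)Z$ is tensorial in $Y,Z$, only the $\pi$-derivatives survive, so
$$(\overset{1}{\nabla}_{X}\overset{1}{T})(Y,Z)=(\overset{1}{\nabla}_{X}\pi)(Z)\,Y-(\overset{1}{\nabla}_{X}\pi)(Y)\,Z=(\omega+\pi(P))\bigl(g(X,Z)Y-g(X,Y)Z\bigr).$$
A direct expansion then gives $\overset{1}{T}(\overset{1}{T}(X,Y),Z)=\pi(Z)\bigl(\pi(Y)X-\pi(X)Y\bigr)$ and $\overset{1}{T}(\overset{1}{T}(Z,X),Y)=\pi(Y)\bigl(\pi(X)Z-\pi(Z)X\bigr)$. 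Crucially, the cyclic sum $\underset{XYZ}{\mathfrak{S}}\,\overset{1}{T}(\overset{1}{T}(X,Y),Z)$ vanishes identically, so it never contributes.

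\textbf{Step 3 (assembling the five identities).} Substitute the expressions of Steps 1--2 into the defining formulas for $\overset{0}{R},\ldots,\overset{5}{R}$ and contract the first slot. The cyclic-sum parts drop out; the $\overset{1}{\nabla}\overset{1}{T}$-parts contribute a multiple of $(n-1)(\omega+\pi(P))g$ with the weight ($\tfrac12$ or $1$) read off the relevant formula; the remaining iterated-torsion parts contribute a multiple of $-(n-1)\,\pi\otimes\pi$ with weight $\tfrac14$, $\tfrac12$ or $1$. Collecting these corrections on top of \eqref{eq:Ric1css} yields \eqref{eq:Ric0css}, \eqref{eq:Ricalphacss}, \eqref{eq:Ric4css} and \eqref{eq:Ric5css}; the coincidence $\overset{2}{R}ic=\overset{3}{R}ic$ in \eqref{eq:Ricalphacss} holds because $\overset{2}{R}-\overset{3}{R}$ is proportional to $g(X,Z)Y-g(X,Y)Z$, which is traceless in the contracted slot.

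The difficulty is purely bookkeeping, and it is concentrated in Step 1: one must retain the $\pi(P)\,g$ term produced when the quadratic part $B(X,B(Y,Z))-B(Y,B(X,Z))$ is contracted, together with the trace coming from $(\overset{g}{\nabla}_{X}B)(Y,Z)$, since these are exactly what promote the naive coefficient $2\omega$ to $2\omega+\pi(P)$ in \eqref{eq:Ric1css}; once that line is correct, each of the others is a one-line linear combination of it. It is also worth fixing the position of the contracted index and the sign convention for $\overset{g}{R}$ at the outset, since the opposite choice merely flips the sign of every $(n-1)$-correction without otherwise changing the structure of the formulas.
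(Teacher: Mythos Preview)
Your proof is correct. The computation in Step~1 is clean and the cancellations you describe do occur exactly as stated; the formula
\[
\overset{1}{R}(X,Y)Z=\overset{g}{R}(X,Y)Z+(2\omega+\pi(P))\bigl(g(X,Z)Y-g(Y,Z)X\bigr)
\]
is right, and your observations in Step~2 (in particular that the cyclic sum of the iterated torsion vanishes identically and that $(\overset{1}{\nabla}_{X}\pi)(Y)=(\omega+\pi(P))g(X,Y)$) make Step~3 a routine contraction. I checked each of the five identities against your scheme and they all come out correctly, including the coincidence $\overset{2}{R}ic=\overset{3}{R}ic$ via the tracelessness of $g(X,Y)Z-g(X,Z)Y$ in the first slot.

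As for comparison with the paper: note that the paper does not actually prove this theorem here --- it is quoted verbatim from the authors' earlier work \cite{mpvz2023} and carries no proof in the present text. Your approach is the natural direct computation and is presumably what that reference does as well.
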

	
	The previous equations show that the Ricci tensors $\overset{\theta}{R}ic$, $\theta=0,1,\dots,5$, are symmetric. 
	If we denote the corresponding scalar curvatures for $\overset{\theta}{R}ic$ and $\overset{g}{R}ic$ by $\overset{\theta}{r}$ and $\overset{g}{r}$, we have the following theorem.
	
	\begin{theorem}
		Let $(\mathcal{M},g,\overset{1}{\nabla})$ be a pseudo-Riemannian manifold with a concircularly semi-symmetric metric connection. The scalar curvatures  $\overset{\theta}{r}$, $\theta=0,1,2,\ldots,5$, and the scalar curvature $\overset{g}{r}$ are related by equations
		\begin{align}
			\label{eq:r0css}
			\overset{0}{r}  = &  \overset{g}{r}   -\frac{3n(n-1)}{2}  \omega - \frac{(n-1)(2n+1)}{4}\pi(P),
			\\
			\label{eq:r1css1}
			\overset{1}{r}  = &  \overset{g}{r}   -2n(n-1) (\omega + \frac{1}{2}\pi(P)),
			\\
			\label{eq:ralphacss}
			\overset{\alpha}{r}  = & \overset{g}{r} - n(n-1)\omega, \;\; \alpha=2,3
			\\
			\label{eq:r4css}
			\overset{4}{r}  = & \overset{g}{r}  - n(n-1) \omega - (n-1)\pi(P),
			\\
			\label{eq:r5css}
			\overset{5}{r}= & \overset{g}{r}- \frac{3n(n-1)}{2}\omega - \frac{n^2-1}{2}\pi(P).
		\end{align}
	\end{theorem}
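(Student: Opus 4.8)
The plan is to derive every scalar-curvature identity by contracting the corresponding Ricci-tensor identity of the preceding theorem with the metric $g$. Recall that the scalar curvature attached to a $(0,2)$-tensor $\overset{\theta}{R}ic$ is $\overset{\theta}{r}=g^{ij}\overset{\theta}{R}ic_{ij}$, and similarly $\overset{g}{r}=g^{ij}\overset{g}{R}ic_{ij}$; since $\overset{1}{\nabla}$ is a metric connection and the tensors $\overset{\theta}{R}ic$, $\theta=0,1,\dots,5$, were already shown to be symmetric, this contraction is unambiguous. Thus the whole proof amounts to applying the operator $\mathrm{tr}_{g}$ term by term to the five equations \eqref{eq:Ric0css}--\eqref{eq:Ric5css}.

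First I would record the two elementary traces that occur on the right-hand sides: $\mathrm{tr}_{g}\,g=g^{ij}g_{ij}=n$, and $\mathrm{tr}_{g}(\pi\otimes\pi)=g^{ij}\pi_{i}\pi_{j}=\pi(P)$, the latter because $\pi(\cdot)=g(\cdot,P)$ forces $g^{ij}\pi_{i}\pi_{j}=\pi^{j}\pi_{j}=\pi(P)$. Applying $\mathrm{tr}_{g}$ to \eqref{eq:Ric0css} then gives $\overset{0}{r}=\overset{g}{r}-\tfrac{n(n-1)}{2}(3\omega+\pi(P))-\tfrac{n-1}{4}\pi(P)$, and merging the two coefficients of $\pi(P)$ via $\tfrac{n(n-1)}{2}+\tfrac{n-1}{4}=\tfrac{(n-1)(2n+1)}{4}$ yields \eqref{eq:r0css}. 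The remaining identities \eqref{eq:r1css1}, \eqref{eq:ralphacss}, \eqref{eq:r4css}, \eqref{eq:r5css} come out in exactly the same manner from \eqref{eq:Ric1css}, \eqref{eq:Ricalphacss}, \eqref{eq:Ric4css}, \eqref{eq:Ric5css}, respectively; for example \eqref{eq:Ric5css} contracts to $\overset{5}{r}=\overset{g}{r}-\tfrac{3n(n-1)}{2}\omega-\big(\tfrac{n(n-1)}{2}+\tfrac{n-1}{2}\big)\pi(P)$, and $\tfrac{n(n-1)}{2}+\tfrac{n-1}{2}=\tfrac{n^{2}-1}{2}$ gives \eqref{eq:r5css}.

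Since no new geometric input is needed beyond the previous theorem, there is no genuine obstacle here; the proof is a bookkeeping exercise. The only points deserving attention are that under $\mathrm{tr}_{g}$ each multiple of $g$ picks up the factor $n$ while each multiple of $\pi\otimes\pi$ picks up the factor $\pi(P)$, so that the $\omega$-coefficients transfer verbatim whereas the $\pi(P)$-coefficients must be added together, and that the contraction is performed with respect to $g$ rather than with respect to any other metric-type tensor — which is exactly what makes $\overset{\theta}{r}$ well defined for the non-symmetric connection $\overset{1}{\nabla}$.
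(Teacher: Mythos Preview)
Your proof is correct and is precisely the approach the paper has in mind: the theorem is stated immediately after the Ricci-tensor relations \eqref{eq:Ric0css}--\eqref{eq:Ric5css} without an explicit proof, since it follows by taking the $g$-trace term by term exactly as you do. Your bookkeeping (in particular the identities $\tfrac{n(n-1)}{2}+\tfrac{n-1}{4}=\tfrac{(n-1)(2n+1)}{4}$ and $\tfrac{n(n-1)}{2}+\tfrac{n-1}{2}=\tfrac{n^{2}-1}{2}$) is accurate.
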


	\begin{remark}
		If $\overset{1}{\nabla}$ is a metric connection (i.e. $\overset{1}{\nabla}g=0$) with torsion tensor $\overset{1}{T}\ne 0$, then $(\mathcal{M},g,\overset{1}{\nabla})$ is called a Riemann-Cartan manifold \cite{gordevastepanov2010,stepanov2010}, which has applications in theories of gravity, such as the Einstein-Cartan (for example, see \cite{petti2021}).
	\end{remark}
	
	\begin{remark}
		In \cite{yanobochner1954,gordevastepanov2010,stepanov2010,goldberg1956}, a Killing vector with respect to a non-symmetric metric connection is called pseudo-Killing vector.
	\end{remark}

	\begin{remark} A conformal mapping satisfying the condition (\ref{eq:conditionforconcircularmapping}) is called concircular, and such a mapping preserves geodesic circles \cite{yano1940}. More precisely, in paper {\cite{yano1940}} K. Yano defined concircular mappings using the condition
		\begin{equation}\label{eq:concircularcondition}
			(\overset{g}{\nabla}_{X} \pi )(Y) - \pi(X)\pi(Y) + \frac{1}{2}\pi(P) g(X,Y) = \mu g(X,Y),
		\end{equation}
		which is equivalent to (\ref{eq:conditionforconcircularmapping}), where $\mu=\omega + \frac{1}{2}\pi(P)$. A semi-symmetric metric connection with condition (\ref{eq:concircularcondition}) is called $S$-concircular \emph{\cite{stavre1982}}.
	\end{remark}

	\section{Einstein and quasi-Einstein manifolds}\label{sec:quasi-Einstein}
	
	Einstein manifolds represent an important class of (pseudo-)Riemannian manifolds where the Ricci tensor is proportional to the metric, playing a crucial role in  general relativity,  string theory, etc.
	
	A tensor $\overset{g}{E}$ which is given by
	\begin{equation*}
		\overset{g}{E}=	\overset{g}{R}ic- \frac{\overset{g}{r}}{n} g,
	\end{equation*}
	is called 
	\textit{traceless Ricci tensor} 
	or \textit{concircular Ricci tensor} \cite{ucd2005}. If it vanishes then the manifold is \textit{Einstein}. Therefore, an Einstein manifold (with respect to the Levi-Civita connection) is characterized by equation
	\begin{equation*}
		\overset{g}{R}ic= \frac{\overset{g}{r}}{n} g,
	\end{equation*}
	where $\overset{g}{r}$ is constant for $n>2$.
	Einstein type tensors were determined by decomposition of linearly independent curvature tensors in \cite{mazl2022}, while in the paper \cite{petrovic2019} they were determined as invariants for concircular mappings of generalized Riemannian manifolds.
	Since the Ricci tensors $\overset{\theta}{R}ic$ are symmetric, and the corresponding Einstein type tensors $\overset{\theta}{E}$, $\theta =0,1,2,\dots,5$ are also symmetric. Using the Einstein type tensors $\overset{\theta}{E}$, $\theta =0,1,2,\dots,5$,  which are given by the equations
	\begin{equation*}
		\overset{\theta}{E} = \overset{\theta}{R}ic - \frac{\overset{\theta}{r}}{n} g,
	\end{equation*}
	we will define special classes of pseudo-Riemannian manifolds with a concircularly semi-symmetric metric connection.
	
	\begin{definition}
		A pseudo-Riemannian manifold $(\mathcal{M},g,\overset{1}{\nabla})$ with a concircularly semi-symmetric metric connection is called an Einstein type manifold of the $\theta$-th kind, $\theta =0,1,2,\dots,5$, if the Einstein type tensor of the $\theta$-th kind vanishes, i.e. $\overset{\theta}{E}= 0$.
	\end{definition}
	\begin{remark}\label{rem:Einsteintypemanifolds}
		According to the previous definition, the Einstein type manifolds are characterized by equation 
		\begin{equation}\label{eq:Eisteintypemanidolds}
			\overset{\theta}{R}ic = \frac{\overset{\theta}{r}}{n} g.
		\end{equation}
		However, the previous equation does not hold for any non-symmetric connection, because the Ricci tensors $\overset{\theta}{R}ic $, $\theta =0,1,2,\dots,5$, are not symmetric with respect to the non-symmetric connection in the general case. Therefore, in the definition of such manifolds, due to the symmetry of the right-hand side of equality (\ref{eq:Eisteintypemanidolds}), instead of $\overset{\theta}{R}ic $, one must take the symmetric part of these tensors, i.e. $sym\overset{\theta}{R}ic $, where 	\begin{equation*}
			sym\overset{\theta}{R}ic(X,Y)=\frac{1}{2}(\overset{\theta}{R}ic(X,Y)+\overset{\theta}{R}ic(Y,X)).
		\end{equation*}
		As we have already noted, for a concircularly semi-symmetric connection all Ricci tensors $\overset{\theta}{R}ic $, $\theta =0,1,2,\dots,5$, are symmetric, i.e. $\overset{\theta}{R}ic =sym \overset{\theta}{R}ic$ holds, which means that the previous definition is valid.
	\end{remark}

	In the previous paper, we dealt with these manifolds and we proved that Einstein type manifolds of the $\theta$-th kind may, in specific cases, reduce to Einstein manifolds (with respect to the Levi-Civita connection).

	\begin{theorem}\cite{mpvz2023} \label{thm:Einstein123}
		A pseudo-Riemannian manifold $(\mathcal{M},g,\overset{1}{\nabla})$ with a concircularly semi-symmetric metric connection is an Einstein manifold if and only if it is an Einstein type manifold of the first, second or third kind.
	\end{theorem}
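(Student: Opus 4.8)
The plan is to unwind the definition of an Einstein type manifold of the $\theta$-th kind directly against the identities \eqref{eq:Ric1css}--\eqref{eq:Ricalphacss} for the Ricci tensors and \eqref{eq:r1css1}--\eqref{eq:ralphacss} for the scalar curvatures, for $\theta=1,2,3$. The point is that in each of these three cases the Ricci tensor $\overset{1}{R}ic$ (resp. $\overset{\alpha}{R}ic$) differs from $\overset{g}{R}ic$ only by a scalar multiple of $g$ -- no $\pi\otimes\pi$ term appears -- and the corresponding scalar curvature differs from $\overset{g}{r}$ by exactly $n$ times that same scalar, so the "correction" terms cancel when one forms $\overset{\theta}{R}ic - \frac{\overset{\theta}{r}}{n}g$.

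Concretely, first I would treat $\theta=1$. By the definition above, the manifold is an Einstein type manifold of the first kind iff $\overset{1}{R}ic = \frac{\overset{1}{r}}{n}g$. Substituting \eqref{eq:Ric1css} on the left and \eqref{eq:r1css1} on the right gives
\[
\overset{g}{R}ic - (n-1)(2\omega+\pi(P))\,g \;=\; \frac{\overset{g}{r}}{n}\,g - 2(n-1)\Big(\omega+\tfrac12\pi(P)\Big)g ,
\]
and since $2(n-1)\big(\omega+\tfrac12\pi(P)\big) = (n-1)(2\omega+\pi(P))$ the two correction terms are identical and cancel, leaving $\overset{g}{R}ic = \frac{\overset{g}{r}}{n}g$, i.e. $\overset{g}{E}=0$. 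For $\theta=2$ and $\theta=3$ the argument is the same using \eqref{eq:Ricalphacss} and \eqref{eq:ralphacss}: the condition $\overset{\alpha}{R}ic=\frac{\overset{\alpha}{r}}{n}g$ becomes $\overset{g}{R}ic - (n-1)\omega\,g = \frac{\overset{g}{r}}{n}g - (n-1)\omega\,g$, again equivalent to $\overset{g}{R}ic = \frac{\overset{g}{r}}{n}g$. The converse direction is obtained by reading each of these chains of equalities backwards: assuming $\overset{g}{R}ic = \frac{\overset{g}{r}}{n}g$, one adds the appropriate scalar multiple of $g$ to both sides and invokes the stated identities to recover $\overset{\theta}{R}ic = \frac{\overset{\theta}{r}}{n}g$ for $\theta=1,2,3$.

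Since these are pure substitutions there is no genuine analytic difficulty; the only point needing a word is that once $\overset{g}{R}ic = \frac{\overset{g}{r}}{n}g$ holds on a connected manifold of dimension $n>2$, the contracted second Bianchi identity (Schur's lemma) forces $\overset{g}{r}$ to be constant, so the manifold is Einstein in the sense recalled at the start of this section. The one thing worth flagging explicitly -- and what I would expect to be the conceptually delicate step rather than a computational one -- is why the statement is restricted to the kinds $\theta=1,2,3$: for $\theta=0,4,5$ the identities \eqref{eq:Ric0css}, \eqref{eq:Ric4css}, \eqref{eq:Ric5css} carry an extra $\pi\otimes\pi$ term which is not a multiple of $g$ in general, so the cancellation above breaks down and those Einstein type conditions do not reduce to the Einstein condition; they instead lead to the quasi-Einstein setting treated in the remainder of the paper.
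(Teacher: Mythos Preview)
Your proposal is correct. The paper does not reproduce a proof here (the theorem is cited from \cite{mpvz2023}), but your computation is exactly the content of the identity $\overset{\beta}{E}=\overset{g}{E}$ for $\beta=1,2,3$ stated in the subsequent theorem (equation \eqref{eq:EbetaEg}), so your approach coincides with the paper's framework.
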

	
	The following theorem presents the dependence of tensors  $\overset{\theta}{E}$ and $\overset{g}{E}$, $\theta =0,1,2,\dots,5$.
	
	\begin{theorem}
		In a pseudo-Riemannian manifold $(\mathcal{M},g,\overset{1}{\nabla})$ with a concircularly semi-symmetric metric connection, Einstein type tensors $\overset{\theta}{E}$, $\theta =0,1,2,\dots,5$ satisfy the following relations
		\begin{align}
			\overset{0}{E} & = \overset{g}{E} + \frac{n-1}{4n}\pi(P)g - \frac{n-1}{4}\pi\otimes\pi, \\
			\label{eq:EbetaEg}
			\overset{\beta}{E} & = \overset{g}{E}, \;\; \beta=1,2,3, \\
			\overset{4}{E} & = \overset{g}{E} + \frac{n-1}{n}\pi(P)g - (n-1) \pi\otimes\pi, \\
			\overset{5}{E} & = \overset{g}{E} + \frac{n-1}{2n}\pi(P)g - \frac{n-1}{2}\pi\otimes\pi.
		\end{align}
	\end{theorem}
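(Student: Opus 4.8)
The plan is a direct substitution into the definition, organized so that the four cases are handled uniformly. By definition $\overset{\theta}{E} = \overset{\theta}{R}ic - \frac{\overset{\theta}{r}}{n}\,g$ and $\overset{g}{E} = \overset{g}{R}ic - \frac{\overset{g}{r}}{n}\,g$, so it suffices to insert the Ricci relations \eqref{eq:Ric0css}--\eqref{eq:Ric5css} and the scalar-curvature relations \eqref{eq:r0css}--\eqref{eq:r5css} and collect terms. I would first note that each of \eqref{eq:Ric0css}--\eqref{eq:Ric5css} has the common shape
\[
\overset{\theta}{R}ic = \overset{g}{R}ic + c_\theta\, g + d_\theta\, \pi\otimes\pi,
\]
where $c_\theta$ is a scalar function built from $\omega$ and $\pi(P)$ while $d_\theta$ is a constant, with $d_1=d_2=d_3=0$, $d_0=-\tfrac{n-1}{4}$, $d_4=-(n-1)$ and $d_5=-\tfrac{n-1}{2}$.

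Next I would take the $g$-trace of this identity. Since $\operatorname{tr}_g g = n$ and $\operatorname{tr}_g(\pi\otimes\pi)=g^{ij}\pi_i\pi_j=\pi(P)$, this gives $\overset{\theta}{r} = \overset{g}{r} + n\,c_\theta + d_\theta\,\pi(P)$, which agrees term by term with \eqref{eq:r0css}--\eqref{eq:r5css} (and may also be used in reverse to read off $c_\theta$ from the scalar-curvature equations). Substituting both expressions into $\overset{\theta}{E}$ yields
\[
\overset{\theta}{E} = \overset{g}{R}ic + c_\theta g + d_\theta \pi\otimes\pi - \tfrac{1}{n}\bigl(\overset{g}{r} + n c_\theta + d_\theta \pi(P)\bigr) g = \overset{g}{E} - \frac{d_\theta}{n}\,\pi(P)\, g + d_\theta\, \pi\otimes\pi,
\]
since the $c_\theta g$ contributions cancel exactly. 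Plugging in the four values of $d_\theta$ produces $\overset{\beta}{E}=\overset{g}{E}$ for $\beta=1,2,3$ and the stated formulas for $\theta=0,4,5$.

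I do not expect a genuine obstacle: all the substantive content already sits in the two preceding theorems, and what remains is a short, uniform computation. The only point requiring slight care is that $\omega$, hence $c_\theta$, is a function rather than a constant, so the trace above is the pointwise contraction with $g^{ij}$; but no derivative of $\omega$ enters, because $c_\theta g$ contributes $n c_\theta$ pointwise. Alternatively, one may simply verify each of the four cases separately by inserting the relevant pair of equations into $\overset{\theta}{R}ic - \frac{\overset{\theta}{r}}{n}g$ and simplifying the coefficient of $\pi(P)g$; for instance, for $\theta=0$ this coefficient equals $-\frac{n-1}{2} + \frac{(n-1)(2n+1)}{4n} = \frac{n-1}{4n}$, as claimed.
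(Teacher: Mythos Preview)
Your proposal is correct and is precisely the direct substitution the paper has in mind: the paper states this theorem without proof, leaving it as an immediate consequence of the Ricci relations \eqref{eq:Ric0css}--\eqref{eq:Ric5css} and the scalar-curvature relations \eqref{eq:r0css}--\eqref{eq:r5css}. Your uniform organization via $\overset{\theta}{R}ic = \overset{g}{R}ic + c_\theta g + d_\theta\,\pi\otimes\pi$ and the observation that the $c_\theta g$ terms cancel in $\overset{\theta}{E}$ is a clean way to carry this out.
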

	From equation (\ref{eq:EbetaEg}) we have a direct consequence.
	\begin{theorem}
		Let $(\mathcal{M},g,\overset{1}{\nabla})$ be a pseudo-Riemannian manifold with a concircularly semi-symmetric metric connection $\overset{1}{\nabla}$ and $\overset{g}{\nabla}$ be the Levi-Civita connection. The traceless Ricci tensor $\overset{g}{E}$ is invariant under connection transformation $\overset{g}{\nabla} \rightarrow \overset{1}{\nabla}$.
	\end{theorem}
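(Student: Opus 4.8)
The plan is to regard the assertion as the $\beta=1$ case of~(\ref{eq:EbetaEg}) and to exhibit that case directly from the already established formulas~(\ref{eq:Ric1css}) and~(\ref{eq:r1css1}); no new tools are needed. Saying that $\overset{g}{E}$ is \emph{invariant} under the transformation $\overset{g}{\nabla}\rightarrow\overset{1}{\nabla}$ means that applying the same recipe $\mathrm{Ric}-\tfrac{r}{n}\,g$ to the curvature of the new connection produces the same tensor: here the relevant curvature of $\overset{1}{\nabla}$ is the usual one, $\overset{1}{R}(X,Y)Z=\overset{1}{\nabla}_X\overset{1}{\nabla}_YZ-\overset{1}{\nabla}_Y\overset{1}{\nabla}_XZ-\overset{1}{\nabla}_{[X,Y]}Z$, whose Ricci contraction and scalar curvature are $\overset{1}{R}ic$ and $\overset{1}{r}$, so the object to be compared with $\overset{g}{E}$ is exactly $\overset{1}{E}=\overset{1}{R}ic-\tfrac{\overset{1}{r}}{n}g$. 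Thus it suffices to prove $\overset{1}{E}=\overset{g}{E}$.

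First I would take the $g$-trace of~(\ref{eq:Ric1css}) to recover~(\ref{eq:r1css1}), i.e. $\overset{1}{r}=\overset{g}{r}-n(n-1)(2\omega+\pi(P))$. Inserting~(\ref{eq:Ric1css}) and this identity into the definition of $\overset{1}{E}$ gives
\[
\begin{aligned}
\overset{1}{E}&=\overset{g}{R}ic-(n-1)(2\omega+\pi(P))\,g-\frac1n\Big(\overset{g}{r}-n(n-1)(2\omega+\pi(P))\Big)g\\
&=\overset{g}{R}ic-\frac{\overset{g}{r}}{n}\,g=\overset{g}{E},
\end{aligned}
\]
the two multiples of $(n-1)(2\omega+\pi(P))\,g$ cancelling. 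The identical computation with~(\ref{eq:Ricalphacss}) and~(\ref{eq:ralphacss}) yields $\overset{2}{E}=\overset{3}{E}=\overset{g}{E}$, completing~(\ref{eq:EbetaEg}); the first-kind equality is the one that expresses the stated invariance.

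I do not expect a genuine obstacle here: the whole point is that the discrepancy between $\overset{1}{R}ic$ and $\overset{g}{R}ic$ in~(\ref{eq:Ric1css}) is a pure scalar multiple of $g$, which is precisely what the trace-free projection annihilates, so the traceless part is left untouched. The only thing requiring care is the interpretation of the word "invariance'' — it refers to the traceless Ricci tensor viewed as a functional of the connection, built from the canonical ($\theta=1$) curvature operator — and it is worth noting, though not needed for the proof, that the analogous statement fails for $\theta=0,4,5$ because the corrections in~(\ref{eq:Ric0css}), (\ref{eq:Ric4css}) and~(\ref{eq:Ric5css}) carry an extra $\pi\otimes\pi$ term that trace-subtraction does not remove. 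This is also in keeping with the classical fact that the traceless (concircular) Ricci tensor is an invariant of concircular deformations, and~(\ref{eq:conditionforconcircularmapping}) is exactly a concircularity condition on the generator.
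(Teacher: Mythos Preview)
Your proposal is correct and follows the same route as the paper: the theorem is stated there as an immediate consequence of~(\ref{eq:EbetaEg}), and you simply make that consequence explicit by substituting~(\ref{eq:Ric1css}) and~(\ref{eq:r1css1}) into the definition of $\overset{1}{E}$ so that the $(n-1)(2\omega+\pi(P))g$ terms cancel. Your reading of ``invariance'' as the equality $\overset{1}{E}=\overset{g}{E}$ is exactly what the paper intends.
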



	There are many papers dealing with the generalization of Einstein manifolds, with different approaches. For example, quasi-Einstein, nearly quasi-Einstein, mixed quasi-Einstein manifolds generalize the Einstein manifolds. Also, the various types of solitons are generalizations of Einstein manifolds.
	
	A \textit{quasi-Einstein manifold} is a (pseudo-)Riemannian manifold whose Ricci tensor has the form
	\begin{equation}\label{eq:qE}
		\overset{g}{R}ic =ag + b \pi\otimes\pi.
	\end{equation}

	\begin{remark}
		The following cases of quasi-Einstein manifolds have been observed in the literature:
		\begin{enumerate}[label=\textup{(\roman*)}, itemsep=0.5em, leftmargin=*, font=\normalfont]
			\item In \cite{chaki2000}, $a$ and $b$ are scalar functions, and $\pi$ is a 1-form with an associated unit vector;
			\item In \cite{deszczetal998}, $a$ and $b$ are real constants, and $\pi$ is a 1-form;
			\item In \cite{goldberg1980}, $a$ and $b$ are scalar functions, and $\pi$ is a closed 1-form;
			\item In \cite{han2021}, $a$ and $b$ are scalar functions, and $\pi$ is a 1-form.
		\end{enumerate}
	\end{remark}


	In the Theorem \ref{thm:Einstein123} we used Einstein type tensors of the first, second and third kind, and in the following we will use the remaining Einstein type tensors  $\overset{0}{E}$,  $\overset{4}{E}$ and  $\overset{5}{E}$ with respect to a concircularly semi-symmetric metric connection and with their help we will determine new conditions for the quasi-Einstein manifold.
	
	If the manifold is Einstein type of the zeroth kind, i.e. if $\overset{0}{E}=0$ holds, then Ricci tensor of the zeroth kind has the form
	\begin{equation}\label{eq:Einstein0thkind}
		\overset{0}{R}ic =\frac{\overset{0}{r}}{n}g.
	\end{equation}
	
	Aftet substituting equation (\ref{eq:Einstein0thkind}) into (\ref{eq:Ric0css}), we get
	\begin{equation*}
		\begin{split}
			\frac{1}{4}(4\overset{g}{r}   -6n(n-1)\omega - (n-1)(2n+1)\pi(P)) g = \overset{g}{R}ic  & - \frac{n-1}{2}  (3\omega + \pi(P)) g  - \frac{n-1}{4}\pi\otimes\pi,
		\end{split}
	\end{equation*}
	where we take into account equation (\ref{eq:r0css}). From the previous equation it follows
	\begin{equation*}
		\overset{g}{R}ic = \frac{1}{4n}(4\overset{g}{r}  - (n-1)\pi(P)) g + \frac{n-1}{4}\pi\otimes\pi,
	\end{equation*}
	from where we can conclude that the manifold is quasi-Einstein (according to \cite{goldberg1980}).
	
	Conversely, if we assume that the previous equation holds, then based on the equations (\ref{eq:Ric0css}) and (\ref{eq:r0css}), after a simple calculation, we get
	\begin{equation*}
		\overset{0}{R}ic= \frac{\overset{0}{r}}{n}g,
	\end{equation*}
	which shows that such a manifold is Einstein type of the zeroth kind.
	\begin{theorem}\label{thm:Ajns0}
		A pseudo-Riemannian manifold $(\mathcal{M},g,\overset{1}{\nabla})$ with a concircularly semi-symmetric metric connection is an Einstein type manifold of the zeroth kind if and only it is a quasi-Einstein manifold whose Ricci tensor $\overset{g}{R}ic$ satisfies the relation
		\begin{equation*}
			\overset{g}{R}ic = \frac{1}{4n}(4\overset{g}{r}  - (n-1)\pi(P)) g + \frac{n-1}{4}\pi\otimes\pi.
		\end{equation*}
	\end{theorem}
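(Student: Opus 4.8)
The plan is to prove both implications by direct substitution into the two linear relations that connect the zeroth-kind objects with their Levi-Civita counterparts, namely equation (\ref{eq:Ric0css}) for the Ricci tensors and equation (\ref{eq:r0css}) for the scalar curvatures; no geometric input beyond these already-established identities (and the symmetry of $\overset{0}{R}ic$ noted after them) is required.

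For the forward implication I would start from the definition: being an Einstein type manifold of the zeroth kind means $\overset{0}{E}=0$, which by $\overset{0}{E}=\overset{0}{R}ic-\frac{\overset{0}{r}}{n}g$ is the same as the relation (\ref{eq:Einstein0thkind}), i.e. $\overset{0}{R}ic=\frac{\overset{0}{r}}{n}g$. Substituting the right-hand side of (\ref{eq:Ric0css}) for $\overset{0}{R}ic$ and the right-hand side of (\ref{eq:r0css}) for $\overset{0}{r}$, one gets an identity in which $\overset{g}{R}ic$ can be isolated. The crucial observation is that the $\omega$-contribution coming from (\ref{eq:Ric0css}), which is $-\tfrac{3(n-1)}{2}\omega\,g$, cancels exactly against the $\omega$-contribution coming from $\tfrac{1}{n}\overset{0}{r}\,g$, which is also $-\tfrac{3(n-1)}{2}\omega\,g$; the scalar function $\omega$ therefore disappears, and collecting the remaining $\pi(P)g$ and $\pi\otimes\pi$ terms yields precisely
$$\overset{g}{R}ic=\frac{1}{4n}\bigl(4\overset{g}{r}-(n-1)\pi(P)\bigr)g+\frac{n-1}{4}\pi\otimes\pi.$$
Since $\overset{g}{r}$ and $\pi(P)$ are scalar functions, $\tfrac{n-1}{4}$ is a constant, and $\pi$ is closed by (\ref{eq:conditionforconcircularmapping}), this is the quasi-Einstein form in the sense of \cite{goldberg1980}. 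The converse is the reverse bookkeeping: assume the displayed expression for $\overset{g}{R}ic$, insert it into (\ref{eq:Ric0css}), use (\ref{eq:r0css}) to re-express $\overset{g}{r}$ through $\overset{0}{r}$, $\omega$, $\pi(P)$, and verify that the $\pi\otimes\pi$-terms, the $\pi(P)g$-terms and the $\omega g$-terms all collapse so that $\overset{0}{R}ic=\tfrac{\overset{0}{r}}{n}g$, i.e. $\overset{0}{E}=0$.

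The main obstacle is purely computational: carefully tracking the coefficients of $g$, of $\pi\otimes\pi$ and of the scalar $\pi(P)$ through the substitutions, and confirming that the $\omega$-dependence genuinely cancels in both directions — if it did not, the equivalence would fail, since $\omega$ is not recoverable from the quasi-Einstein data alone. Everything else is elementary linear algebra on symmetric $(0,2)$-tensors.
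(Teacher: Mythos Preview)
Your proposal is correct and follows essentially the same approach as the paper: both directions are obtained by substituting (\ref{eq:Ric0css}) and (\ref{eq:r0css}) into the defining relation $\overset{0}{R}ic=\tfrac{\overset{0}{r}}{n}g$ and simplifying, with the $\omega$-terms cancelling so that only $\overset{g}{r}$, $\pi(P)$ and $\pi\otimes\pi$ survive. Your explicit remark that the $\omega$-cancellation is the key consistency check is a nice addition, but the argument is otherwise identical to the paper's.
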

	
	Analogously, using Ricci tensors and curvature scalars of the fourth and fifth kind, we can prove the following theorems.
	\begin{theorem}
		A pseudo-Riemannian manifold $(\mathcal{M},g,\overset{1}{\nabla})$ with a concircularly semi-symmetric metric connection is an Einstein type manifold of the fourth kind if and only it is a quasi-Einstein manifold whose Ricci tensor $\overset{g}{R}ic$ satisfies the relation
		\begin{equation*}
			\overset{g}{R}ic = \frac{1}{n}(\overset{g}{r}  - (n-1)\pi(P)) g + (n-1)\pi\otimes\pi.
		\end{equation*}
	\end{theorem}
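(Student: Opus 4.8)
The plan is to imitate the argument just given above for the zeroth kind (Theorem \ref{thm:Ajns0}), now working with the fourth-kind quantities. Everything rests on two identities already recorded in the excerpt: the Ricci relation (\ref{eq:Ric4css}), $\overset{4}{R}ic = \overset{g}{R}ic - (n-1)\omega g - (n-1)\pi\otimes\pi$, and the scalar relation (\ref{eq:r4css}), $\overset{4}{r} = \overset{g}{r} - n(n-1)\omega - (n-1)\pi(P)$. Since all the Ricci tensors $\overset{\theta}{R}ic$ are symmetric for a concircularly semi-symmetric metric connection (as noted in the excerpt), the Einstein type tensor $\overset{4}{E}=\overset{4}{R}ic-\frac{\overset{4}{r}}{n}g$ is well defined and ``Einstein type of the fourth kind'' is equivalent to $\overset{4}{R}ic=\frac{\overset{4}{r}}{n}g$.

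For the forward direction I would assume $\overset{4}{E}=0$, i.e. $\overset{4}{R}ic=\frac{\overset{4}{r}}{n}g$, substitute this into (\ref{eq:Ric4css}) and solve for $\overset{g}{R}ic$, obtaining $\overset{g}{R}ic=\frac{\overset{4}{r}}{n}g+(n-1)\omega g+(n-1)\pi\otimes\pi$. Eliminating $\overset{4}{r}$ by means of (\ref{eq:r4css}) makes the two $(n-1)\omega g$ contributions cancel, leaving exactly $\overset{g}{R}ic=\frac1n(\overset{g}{r}-(n-1)\pi(P))g+(n-1)\pi\otimes\pi$, which is of the quasi-Einstein form (\ref{eq:qE}) with $a=\frac1n(\overset{g}{r}-(n-1)\pi(P))$ and $b=n-1$.

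For the converse I would assume the displayed quasi-Einstein relation and insert it into (\ref{eq:Ric4css}); the $(n-1)\pi\otimes\pi$ terms cancel, and collecting the $g$-terms while reading (\ref{eq:r4css}) backwards yields $\overset{4}{R}ic=\frac{\overset{4}{r}}{n}g$, i.e. $\overset{4}{E}=0$. There is essentially no obstacle: the proof is a short linear computation, and the only point worth a sanity check is that taking the $g$-trace of the displayed relation returns $\overset{g}{r}$ identically, so the stated Ricci form imposes no hidden constraint on $\pi(P)$ and the equivalence is clean. The fifth-kind statement announced immediately after can be obtained by the same scheme, using (\ref{eq:Ric5css}) and (\ref{eq:r5css}) in place of (\ref{eq:Ric4css}) and (\ref{eq:r4css}).
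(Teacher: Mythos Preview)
Your proposal is correct and follows exactly the route the paper indicates: the paper does not spell out the argument for the fourth kind but explicitly says it is proved ``analogously, using Ricci tensors and curvature scalars of the fourth and fifth kind'', i.e.\ by substituting \eqref{eq:Ric4css} and \eqref{eq:r4css} into the Einstein-type condition just as was done for the zeroth kind. Your computations check out in both directions, and your remark that the trace of the displayed relation is an identity in $\overset{g}{r}$ is a nice consistency observation.
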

	\begin{theorem}
		A pseudo-Riemannian manifold $(\mathcal{M},g,\overset{1}{\nabla})$ with a concircularly semi-symmetric metric connection is an Einstein type manifold of the fifth kind if and only it is a quasi-Einstein manifold whose Ricci tensor $\overset{g}{R}ic$ satisfies the relation
		\begin{equation*}
			\overset{g}{R}ic = \frac{1}{2n}(2\overset{g}{r}  - (n-1)\pi(P)) g + \frac{n-1}{2}\pi\otimes\pi.
		\end{equation*}
	\end{theorem}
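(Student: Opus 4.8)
The plan is to follow verbatim the scheme already used for Theorem~\ref{thm:Ajns0} and its fourth-kind analogue, now feeding in the fifth-kind data. First I would assume that $(\mathcal{M},g,\overset{1}{\nabla})$ is an Einstein type manifold of the fifth kind; by Remark~\ref{rem:Einsteintypemanifolds} this is the assertion $\overset{5}{R}ic = \frac{\overset{5}{r}}{n}\,g$. Into this I substitute the relation (\ref{eq:Ric5css}) expressing $\overset{5}{R}ic$ through $\overset{g}{R}ic$ and the relation (\ref{eq:r5css}) expressing $\overset{5}{r}$ through $\overset{g}{r}$, obtaining
\[
\overset{g}{R}ic - \tfrac{n-1}{2}\bigl(3\omega+\pi(P)\bigr)g - \tfrac{n-1}{2}\,\pi\otimes\pi
= \tfrac{1}{n}\Bigl(\overset{g}{r} - \tfrac{3n(n-1)}{2}\omega - \tfrac{n^{2}-1}{2}\pi(P)\Bigr)g,
\]
and then solve for $\overset{g}{R}ic$.

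The only step requiring attention is the bookkeeping of the $n$-dependent coefficients. The terms carrying the scalar function $\omega$ have coefficient $\tfrac{3(n-1)}{2}$ on both sides and therefore cancel, so no hypothesis on $\omega$ is needed; collecting the remaining $\pi(P)g$ contributions gives $\tfrac{n-1}{2}-\tfrac{n^{2}-1}{2n} = -\tfrac{n-1}{2n}$, which leaves exactly $\overset{g}{R}ic = \tfrac{1}{2n}\bigl(2\overset{g}{r}-(n-1)\pi(P)\bigr)g + \tfrac{n-1}{2}\,\pi\otimes\pi$. Since $\pi$ is a closed $1$-form (as observed just after (\ref{eq:conditionforconcircularmapping})) and the coefficients $a = \tfrac{1}{2n}(2\overset{g}{r}-(n-1)\pi(P))$ and $b=\tfrac{n-1}{2}$ are scalar functions, this places $(\mathcal{M},g)$ in the class of quasi-Einstein manifolds in the sense of \cite{goldberg1980}.

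For the converse I would run the same computation in reverse: starting from the displayed form of $\overset{g}{R}ic$, substitute it into (\ref{eq:Ric5css}) to get a closed expression for $\overset{5}{R}ic$, contract to read off $\overset{5}{r}$ (consistent with (\ref{eq:r5css})), and check that $\overset{5}{R}ic - \tfrac{\overset{5}{r}}{n}g$ vanishes, i.e. $\overset{5}{E}=0$. More economically one may invoke the already-established identity $\overset{5}{E} = \overset{g}{E} + \tfrac{n-1}{2n}\pi(P)g - \tfrac{n-1}{2}\pi\otimes\pi$: the displayed hypothesis says precisely that $\overset{g}{E} = \overset{g}{R}ic - \tfrac{\overset{g}{r}}{n}g$ equals $-\tfrac{n-1}{2n}\pi(P)g + \tfrac{n-1}{2}\pi\otimes\pi$, whence $\overset{5}{E}=0$, and both implications collapse into a single equivalence. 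I do not anticipate any genuine obstacle; as in the zeroth- and fourth-kind cases, the only pitfall is an arithmetic slip in the coefficients, against which the cancellation of $\omega$ provides a convenient consistency check.
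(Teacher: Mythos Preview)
Your proposal is correct and mirrors exactly the method the paper itself indicates: the authors state that the fifth-kind theorem ``can be proved in the similar way'' as Theorem~\ref{thm:Ajns0}, and your substitution of (\ref{eq:Ric5css}) and (\ref{eq:r5css}) into the condition $\overset{5}{R}ic=\tfrac{\overset{5}{r}}{n}g$, together with the cancellation of the $\omega$-terms and the coefficient bookkeeping, is precisely that analogous computation. Your alternative route for the converse via the identity $\overset{5}{E}=\overset{g}{E}+\tfrac{n-1}{2n}\pi(P)g-\tfrac{n-1}{2}\pi\otimes\pi$ is a minor streamlining but not a genuinely different approach.
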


	Since a perfect fluid space-time is example of quasi-Einstein manifolds, it is natural to try to apply the previous results to a Lorentzian manifold and perfect fluid space-time.

	

	\section{Lorentzian manifolds}
	
	A Lorentzian manifold represents a class of pseudo-Riemannian manifolds with a Lorentzian metric $g$ of signature $(1,n-1)$ (or, equivalently, $(n-1,1)$). This manifold is important in applications of theory of relativity and cosmology, because space-time is a four-dimensional time-oriented Lorentzian manifold.  Generalized Robertson-Walker (briefly, GRW) space-time is a special class of Lorentzian manifolds and it is defined in \cite{alias1995}. 
	
	\begin{definition}
		An $n$-dimensional Lorentzian manifold ($n\geq3$) is GRW space-time if the metric $g$ has the form
		\begin{equation*}
			ds^2=g_{ij}dx^i dx^j = -(dt)^2 + f(t)^2 g^{*}_{\mu\nu}(\vec{x}) dx^{\mu}dx^{\nu},
		\end{equation*}
		where $t$ is time and $g^{*}$ is the metric of a Riemannian submanifold (of dimension $(n-1)$).
	\end{definition}
	
	If the metric $g^{*}$ has dimension $3$ and constant curvature then a GRW space-time is actually a Robertson-Walker (briefly, RW) space-time. Thus, GRW space-times extend a RW space-time and, in addition, includes some other space-times, such as Lorentz-Minkowski, Einstein-de Sitter, de Sitter, Friedman cosmological model, etc.
	
	In the last several years, many authors have observed Lorentzian manifolds with various types of semi-symmetric connections, which can be found in the following papers \cite{li2023Ln,ucd2024,yilmaz2023,chaubeysuhde2020, chaubey2021, li2024Ln,chaubey2022b,suh2024}. Here we will study concircularly semi-symmetric metric connection on Lorentzian manifolds.
	
	Let the generator of a concircularly semi-symmetric metric connection be a unit timelike vector, i.e. $g(P,P)=-1$. In the paper \cite{mzv2025} it was proved that for such a connection on Lorentzian manifolds one obtains $\omega=1$, and directly based on (\ref{eq:conditionforconcircularmapping}) the following relation holds  
	\begin{equation*}
		(\overset{g}{\nabla}_{X} \pi )(Y) = g(X,Y) + \pi(X)\pi(Y),
	\end{equation*}
	from where
	\begin{equation}\label{eq:Ptorse-forming1}
		\overset{g}{\nabla}_{X} P= X + \pi(X)P,
	\end{equation}
	i.e.
	\begin{equation}\label{eq:Ptorse-forming}
		\overset{g}{\nabla}_{X} P= -\pi(P)(X + \pi(X)P).
	\end{equation}
	Also, the following two theorems hold.
	
	\begin{theorem}\cite{mzv2025}\label{thm:ssmPcon}
		If a vector field $P$ is a unit timelike vector, then a concircularly semi-symmetric metric connection is a semi-symmetric metric $P$-connection.
	\end{theorem}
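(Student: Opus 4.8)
The plan is to reduce the statement to the criterion stated just after equation~\eqref{eq:nabla1XP}, namely that a concircularly semi-symmetric metric connection $\overset{1}{\nabla}$ is a semi-symmetric metric $P$-connection if and only if $\omega=-g(P,P)$. So the whole task is to show that the scalar function $\omega$ in the defining relation~\eqref{eq:conditionforconcircularmapping} is forced to equal $1$ once we assume $g(P,P)=-1$.

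First I would evaluate~\eqref{eq:conditionforconcircularmapping} on the pair $(P,P)$, which gives
\begin{equation*}
	(\overset{g}{\nabla}_{P}\pi)(P) - \pi(P)^2 = \omega\, g(P,P).
\end{equation*}
Next I would compute the left-hand side using the hypothesis. Since $\pi(P)=g(P,P)=-1$ is a constant, metric compatibility of $\overset{g}{\nabla}$ yields $0=\overset{g}{\nabla}_{P}\bigl(g(P,P)\bigr)=2g(\overset{g}{\nabla}_{P}P,P)=2\pi(\overset{g}{\nabla}_{P}P)$, so $\pi(\overset{g}{\nabla}_{P}P)=0$, and then $(\overset{g}{\nabla}_{P}\pi)(P)=\overset{g}{\nabla}_{P}(\pi(P))-\pi(\overset{g}{\nabla}_{P}P)=0$; this is exactly item (iii) of Theorem~\ref{thrm21} evaluated at $X=P$ with $\pi(P)=-1$. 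Substituting back, the displayed identity becomes $0-1=-\omega$, hence $\omega=1$. Since then $-g(P,P)=1=\omega$, the criterion following~\eqref{eq:nabla1XP} applies and $\overset{1}{\nabla}$ is a semi-symmetric metric $P$-connection; equivalently, one may finish directly from~\eqref{eq:nabla1XP}, since $\omega+\pi(P)=1+(-1)=0$ forces $\overset{1}{\nabla}_{X}P=0$ for every $X$, which is the definition of a semi-symmetric metric $P$-connection.

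I do not expect a genuine obstacle here: the statement is essentially a one-line substitution once one observes that $\omega$ is not free but is pinned down by the unit-norm condition. The only point requiring a little care is the vanishing of $(\overset{g}{\nabla}_{P}\pi)(P)$, and this follows immediately from the constancy of $g(P,P)$ together with $\overset{g}{\nabla}g=0$; alternatively it is already recorded in Theorem~\ref{thrm21}(iii). Everything else is routine algebra.
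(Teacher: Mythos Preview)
Your argument is correct and follows the same route the paper indicates (the result is quoted from \cite{mzv2025}, and the surrounding discussion explains that the unit-timelike hypothesis forces $\omega=1$, after which the criterion following equation~\eqref{eq:nabla1XP}---equivalently, $\overset{1}{\nabla}_{X}P=(\omega+\pi(P))X=0$---finishes the job). One small remark: your parenthetical appeal to Theorem~\ref{thrm21}(iii) is not an independent confirmation that $(\overset{g}{\nabla}_{P}\pi)(P)=0$, since item~(iii) at $X=P$ only gives $(\omega+\pi(P))\pi(P)=1-\omega$, which is not a priori zero; your primary derivation from the constancy of $g(P,P)$ together with $\overset{g}{\nabla}g=0$ is, however, complete on its own.
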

	
	\begin{theorem}\cite{mzv2025}\label{cor:CSSGRW}
		An $n$-dimensional Lorentzian manifold $(n\geq3)$ equipped with a concircularly semi-symmetric metric connection whose associated vector $P$ is a unit timelike vector field is a GRW space-time.
	\end{theorem}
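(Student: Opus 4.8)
The plan is to recognize that, once the generator $P$ is a unit timelike vector, equation (\ref{eq:Ptorse-forming1}) exhibits $P$ as a unit timelike \emph{torse-forming} vector field, and then to invoke one of the standard geometric characterizations of GRW space-times in terms of such vectors.

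First I would record why $\omega$ collapses to the constant $1$, since this is what makes (\ref{eq:Ptorse-forming1}) available. Because $\pi(P)=g(P,P)=-1$ is constant, differentiating gives $g(\overset{g}{\nabla}_X P,P)=0$, hence $(\overset{g}{\nabla}_X\pi)(P)=0$ for every $X$; putting $Y=P$ in the defining relation (\ref{eq:conditionforconcircularmapping}) of the connection then yields $\pi(X)=\omega\,\pi(X)$ for all $X$, and since $\pi\neq 0$ this forces $\omega\equiv 1$. Equation (\ref{eq:conditionforconcircularmapping}) thus reads $(\overset{g}{\nabla}_X\pi)(Y)=g(X,Y)+\pi(X)\pi(Y)$, equivalently $\overset{g}{\nabla}_X P=X+\pi(X)P$, which is exactly the torse-forming equation (\ref{eq:torse-forming}) with coefficient $\omega=1$ and $1$-form $\eta=\pi$.

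Now I would conclude in either of two equivalent ways. Directly, $P$ is a unit timelike torse-forming vector field, so by the characterization of Mantica and Molinari \cite{manticamolinari2017} the Lorentzian manifold (of dimension $n\geq 3$) is a GRW space-time. Alternatively, setting $\phi X:=\overset{g}{\nabla}_X P=X+\pi(X)P$, the quadruple $(g,\pi,P,\phi)$ satisfies $\overset{g}{\nabla}\pi=\omega(g+\pi\otimes\pi)$ with $\omega=1$, i.e. it is a Lorentzian concircular structure in the sense recalled earlier, so by \cite{manticamolinari2019} it coincides with a GRW space-time. One could also rescale $P$ to $\tilde P=\lambda P$ with $d(\ln\lambda)=-\pi$ — locally possible because $\pi$ is closed by (\ref{eq:conditionforconcircularmapping}) — obtaining a timelike concircular vector in Fialkow's sense, $\overset{g}{\nabla}_X\tilde P=\lambda X$, and apply Chen's characterization \cite{chen2014}.

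I do not expect a genuine obstruction here; the content is essentially a verification that the hypotheses of the cited characterization are met. The one point that needs care is matching normalizations: one must check that the torse-forming/concircular conventions (signs, and the fact that here the coefficient is the constant $1$ rather than an arbitrary function) coincide with those of \cite{manticamolinari2017,manticamolinari2019,chen2014}, and that the cited characterization is stated under exactly the dimension hypothesis $n\geq 3$ used here, with no extra completeness or connectedness assumption beyond what a Lorentzian manifold already carries.
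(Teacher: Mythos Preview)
Your proposal is correct and matches the paper's approach. The paper does not give an independent proof of this theorem here (it is quoted from \cite{mzv2025}), but immediately before the statement it records precisely the two ingredients you use --- that $g(P,P)=-1$ forces $\omega=1$, whence $\overset{g}{\nabla}_X P=X+\pi(X)P$ --- and in the introduction it cites the Mantica--Molinari characterization of GRW space-times via a unit timelike torse-forming vector field \cite{manticamolinari2017,manticamolinari2019}; your argument simply assembles these pieces, and your alternative route via Chen's concircular criterion \cite{chen2014} is a legitimate variant of the same idea.
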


	Hence, the triple $(\mathcal{M},g,\overset{1}{\nabla})$ represents an $n$-dimensional GRW space-time with semi-symmetric metric $P$-connection.
	\begin{theorem}
		In a GRW space-time $(\mathcal{M},g,\overset{1}{\nabla})$ the following relations hold
		
		\begin{enumerate}[label=\textup{(\roman*)}, itemsep=0.5em, leftmargin=*, font=\normalfont]
			\item $(\overset{g}{\nabla}_{X} \pi )(P)  =  (\overset{g}{\nabla}_{P} \pi )(X) = \pi (\overset{g}{\nabla}_{P}X) = 0$,
			\item \(\displaystyle ({\mathcal{L}}_{P} \pi) (X)  = 0,  \)
			\item \(\displaystyle 	\overset{1}{T} (P,X)  = \overset{g}{\nabla}_X P. \)
		\end{enumerate}
		
	\end{theorem}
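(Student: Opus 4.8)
The plan is to obtain all three identities as immediate specializations of Theorem~\ref{thrm21} and the structure equations of the preceding section. The one observation that makes everything collapse is this: by Theorem~\ref{cor:CSSGRW} together with the computation leading to~\eqref{eq:Ptorse-forming1}, when the generator $P$ is a unit timelike vector one has $\omega=1$ and $\pi(P)=g(P,P)=-1$, hence
\[
\omega+\pi(P)=0 .
\]
Every $P$-dependent scalar coefficient appearing in Theorem~\ref{thrm21} is a multiple of $\omega+\pi(P)$, so each such term drops out.

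For part~(i), I would simply quote Theorem~\ref{thrm21}(iii), which states that $(\overset{g}{\nabla}_{X}\pi)(P)=(\overset{g}{\nabla}_{P}\pi)(X)=\pi(\overset{g}{\nabla}_{P}X)=(\omega+\pi(P))\pi(X)$; since $\omega+\pi(P)=0$, all three coincide with $0$. For part~(ii), Theorem~\ref{thrm21}(iv) gives $(\mathcal{L}_{P}\pi)(X)=2(\omega+\pi(P))\pi(X)=0$. If one prefers a self-contained argument, expanding the Lie derivative through the Levi-Civita connection yields $(\mathcal{L}_{P}\pi)(X)=(\overset{g}{\nabla}_{P}\pi)(X)+\pi(\overset{g}{\nabla}_{X}P)$, where the first summand vanishes by part~(i) and the second equals $g(\overset{g}{\nabla}_{X}P,P)=\tfrac12\,X\!\left(g(P,P)\right)=0$ by metric compatibility; either route closes~(ii).

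For part~(iii), I would compute directly from the torsion of a semi-symmetric metric connection, $\overset{1}{T}(X,Y)=\pi(Y)X-\pi(X)Y$. Putting $P$ in the first slot and using $\pi(P)=-1$,
\[
\overset{1}{T}(P,X)=\pi(X)P-\pi(P)X=X+\pi(X)P ,
\]
which is exactly the right-hand side of~\eqref{eq:Ptorse-forming1}. Therefore $\overset{1}{T}(P,X)=\overset{g}{\nabla}_{X}P$, as claimed.

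As for difficulty: there is essentially no obstacle to overcome — each assertion is a one-line consequence of results already in place. The only point that needs to be flagged with care is the bookkeeping identity $\omega+\pi(P)=0$, which holds precisely because the generator is unit timelike (equivalently, because by Theorem~\ref{thm:ssmPcon} the connection is then a semi-symmetric metric $P$-connection); without it the right-hand sides of Theorem~\ref{thrm21}(iii)–(iv) would not reduce to zero, and part~(iii) would read $\overset{1}{T}(P,X)=\pi(X)P-\pi(P)X$ with a genuinely $\pi(P)$-weighted term rather than matching $\overset{g}{\nabla}_{X}P$ on the nose.
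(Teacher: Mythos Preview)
Your proof is correct and follows essentially the same approach as the paper's own proof, which simply notes that $g(P,P)=-\omega=-1$ (i.e.\ $\omega+\pi(P)=0$) and appeals to Theorem~\ref{thrm21}. Your treatment of part~(iii) via a direct torsion computation is in fact more explicit than the paper's one-line reference, since item~(iii) does not appear verbatim among the identities listed in Theorem~\ref{thrm21}.
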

	\begin{proof}
		Considering that $g(P,P)=-\omega=-1$ holds in a GRW space-time $(\mathcal{M},g,\overset{1}{\nabla})$, the previous relations are a direct consequence of Theorem \ref{thrm21}.   
	\end{proof}

	From the equation (\ref{eq:Ptorse-forming}) we can easily conclude that $\overset{g}{\nabla}_P P=0$, which means that the vector $P$ is geodesic.
	The following assertion is also easily proven.
	\begin{theorem}
		A torsion tensor of a semi-symmetric metric $P$-connection $	\overset{1}{\nabla}$ is parallel with respect to $\overset{1}{\nabla}$.
	\end{theorem}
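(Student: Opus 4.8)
The plan is to prove this by a direct expansion of $\overset{1}{\nabla}\,\overset{1}{T}$, using the two defining features of a semi-symmetric metric $P$-connection: that it is metric, $\overset{1}{\nabla} g = 0$, and that its generator is parallel, $\overset{1}{\nabla} P = 0$ (in our concircular setting this is equivalent to $\omega + \pi(P) = 0$, which by \eqref{eq:nabla1XP} gives $\overset{1}{\nabla}_X P = 0$ for all $X$). The key preliminary observation is that the $1$-form $\pi(\cdot) = g(\cdot,P)$ is then itself $\overset{1}{\nabla}$-parallel: since $\overset{1}{\nabla} g = 0$ and $\overset{1}{\nabla} P = 0$, we get $(\overset{1}{\nabla}_Z \pi)(Y) = (\overset{1}{\nabla}_Z g)(Y,P) + g(Y, \overset{1}{\nabla}_Z P) = 0$, so that $Z(\pi(Y)) = \pi(\overset{1}{\nabla}_Z Y)$ for all $Y, Z$. (Alternatively one can verify $\overset{1}{\nabla}\pi = 0$ componentwise from \eqref{eq:SSmc} together with \eqref{eq:Ptorse-forming} specialized to the $P$-connection case.)

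Next I would write out, for arbitrary vector fields $X,Y,Z$,
\begin{equation*}
(\overset{1}{\nabla}_Z \overset{1}{T})(X,Y) = \overset{1}{\nabla}_Z\big(\overset{1}{T}(X,Y)\big) - \overset{1}{T}(\overset{1}{\nabla}_Z X, Y) - \overset{1}{T}(X, \overset{1}{\nabla}_Z Y),
\end{equation*}
substitute the torsion expression $\overset{1}{T}(X,Y) = \pi(Y)X - \pi(X)Y$, and apply the Leibniz rule to the first term. Using $Z(\pi(Y)) = \pi(\overset{1}{\nabla}_Z Y)$ and $Z(\pi(X)) = \pi(\overset{1}{\nabla}_Z X)$, the first term expands to
\begin{equation*}
\pi(\overset{1}{\nabla}_Z Y)X + \pi(Y)\overset{1}{\nabla}_Z X - \pi(\overset{1}{\nabla}_Z X)Y - \pi(X)\overset{1}{\nabla}_Z Y,
\end{equation*}
while the remaining two terms are exactly $\pi(Y)\overset{1}{\nabla}_Z X - \pi(\overset{1}{\nabla}_Z X)Y$ and $\pi(\overset{1}{\nabla}_Z Y)X - \pi(X)\overset{1}{\nabla}_Z Y$. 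Subtracting, the six terms cancel in pairs, so $(\overset{1}{\nabla}_Z \overset{1}{T})(X,Y) = 0$; since $X,Y,Z$ are arbitrary, $\overset{1}{\nabla}\,\overset{1}{T} = 0$.

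I do not expect a genuine obstacle here; the computation is routine once $\overset{1}{\nabla}\pi = 0$ is in hand, and that step is the only one requiring a word of justification. If one prefers a more conceptual packaging, note that the identity endomorphism is parallel under \emph{any} connection, so $\overset{1}{T}$, being built bilinearly from the $\overset{1}{\nabla}$-parallel objects $\pi$ and $\mathrm{Id}$, is automatically parallel; the explicit expansion above is simply the unfolding of this remark.
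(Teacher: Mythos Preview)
Your argument is correct: once you observe that $\overset{1}{\nabla}g=0$ and $\overset{1}{\nabla}P=0$ force $\overset{1}{\nabla}\pi=0$, the direct expansion of $(\overset{1}{\nabla}_Z\overset{1}{T})(X,Y)$ cancels termwise exactly as you wrote. The paper itself gives no proof beyond the remark that the assertion ``is also easily proven,'' so your computation is precisely the routine verification the authors had in mind.
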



	In the following, we will consider the Ricci tensors $\overset{\theta}{R}ic$, $\theta=0,1,2,\ldots,5$, which will give us the conditions for a GRW space-time to be a perfect fluid space-time. Since we now have values for $\pi(P)$ and $\omega$, the equations of the Ricci tensors (\ref{eq:Ric0css})-(\ref{eq:Ric5css}) get a simpler form, which we will present in the next theorem.

	\begin{theorem}
		In a GRW space-time $(\mathcal{M},g,\overset{1}{\nabla})$, Ricci tensors $\overset{\theta}{R}ic$, $\theta=0,1,2,\ldots,5$, and the Ricci tensor $\overset{g}{R}ic$ are related by equations
		\begin{align*}
			\overset{0}{R}ic  = &  \overset{g}{R}ic  -\frac{n-1}{4} (4g + \pi\otimes\pi ) ,
			\\
			\overset{\beta}{R}ic = & \overset{g}{R}ic - (n-1) g, \;\; \beta=1,2,3,
			\\
			\overset{4}{R}ic = & \overset{g}{R}ic - (n-1)  (g + \pi\otimes\pi ),
			\\
			\overset{5}{R}ic = & \overset{g}{R}ic- \frac{n-1}{2} (2g + \pi\otimes\pi ).
		\end{align*}
	\end{theorem}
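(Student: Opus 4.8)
The plan is to specialize the general relations (\ref{eq:Ric0css})--(\ref{eq:Ric5css}) established in \cite{mpvz2023} to the present setting. The key observation is that in a GRW space-time $(\mathcal{M},g,\overset{1}{\nabla})$ the generator $P$ is a unit timelike vector, hence $\pi(P)=g(P,P)=-1$, and, as recalled above (see \cite{mzv2025}), the scalar function $\omega$ equals $1$. So the entire proof consists of inserting the two numerical values $\omega=1$ and $\pi(P)=-1$ into (\ref{eq:Ric0css})--(\ref{eq:Ric5css}) and regrouping the scalar coefficients into the stated factored form.

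First I would treat $\overset{0}{R}ic$: in (\ref{eq:Ric0css}) one has $3\omega+\pi(P)=2$, so the coefficient of $g$ becomes $-\frac{n-1}{2}\cdot 2=-(n-1)$, and collecting terms gives $\overset{0}{R}ic=\overset{g}{R}ic-(n-1)g-\frac{n-1}{4}\pi\otimes\pi=\overset{g}{R}ic-\frac{n-1}{4}(4g+\pi\otimes\pi)$, i.e. (\ref{eq:Ric0cssLn}). Next, for $\overset{1}{R}ic$ I use (\ref{eq:Ric1css}) with $2\omega+\pi(P)=1$, obtaining $\overset{1}{R}ic=\overset{g}{R}ic-(n-1)g$; for $\alpha=2,3$ equation (\ref{eq:Ricalphacss}) with $\omega=1$ yields the same expression, so all three cases collapse to (\ref{eq:RicbetacssLn}).

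Then (\ref{eq:Ric4css}) with $\omega=1$ immediately gives $\overset{4}{R}ic=\overset{g}{R}ic-(n-1)g-(n-1)\pi\otimes\pi=\overset{g}{R}ic-(n-1)(g+\pi\otimes\pi)$, which is (\ref{eq:Ric4cssLn}); and (\ref{eq:Ric5css}) with $3\omega+\pi(P)=2$ gives $\overset{5}{R}ic=\overset{g}{R}ic-(n-1)g-\frac{n-1}{2}\pi\otimes\pi=\overset{g}{R}ic-\frac{n-1}{2}(2g+\pi\otimes\pi)$, which is (\ref{eq:Ric5cssLn}). This exhausts all six cases.

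I do not expect any genuine obstacle: the substantive content is already contained in the theorem of \cite{mpvz2023} together with Theorem \ref{cor:CSSGRW} and the identification $\omega=1$, $\pi(P)=-1$. The only point that requires a little care is the bookkeeping in combining the scalar multiples of $g$ so that each relation comes out in the compact ``factored'' shape displayed in the statement; this is routine algebra.
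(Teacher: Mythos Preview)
Your proposal is correct and follows exactly the approach implicit in the paper: the theorem is presented there as the specialization of (\ref{eq:Ric0css})--(\ref{eq:Ric5css}) obtained by plugging in $\omega=1$ and $\pi(P)=-1$, and you have carried out precisely this substitution with the algebra checked line by line.
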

	
	The previous equations show us that the tensors $\overset{1}{R}ic$, $\overset{2}{R}ic$ and $\overset{3}{R}ic$ coincide, so in the following, instead of $\overset{\beta}{R}ic$, $\beta=1,2,3$, we will use only $\overset{1}{R}ic$.
	
	Based on the property of the parallelism of the vector $P$ with respect to the connection $\overset{1}{\nabla}$ (according to Theorem \ref{thm:ssmPcon}), it is easy to show that in a GRW space-time $(\mathcal{M},g,\overset{1}{\nabla})$ holds \cite{yilmaz2023}
	\begin{align*}
		\overset{1}{R}ic (P,X) & =0.
	\end{align*}
	
	On the other hand, the Ricci tensor $\overset{g}{R}ic$, in a GRW space-time $(\mathcal{M},g,\overset{1}{\nabla})$, has the following property (see \cite{chaubeysuhde2020,siddiqi2019})
	\begin{align}\label{eq:RicPX}
		\overset{g}{R}ic(P,X) & = (n-1)\pi(X),
	\end{align}
	while other Ricci tensors satisfy the following relations \cite{mzv2025}
	\begin{equation*}
		4\overset{0}{R}ic(P,X) = \overset{4}{R}ic(P,X) = 2\overset{5}{R}ic(P,X) = (n-1)\pi(X).
	\end{equation*}
	which shows that $\frac{n-1}{4}$, $(n-1)$ and $\frac{n-1}{2}$ are eigenvalues of the Ricci tensors $\overset{0}{R}ic$, $\overset{4}{R}ic$, $\overset{5}{R}ic$, respectively, corresponding to the eigenvector $P$. 
	On the other hand, the equation (\ref{eq:RicPX}) shows that $(n-1)$ is an eigenvalue of the Ricci tensor $\overset{g}{R}ic$ corresponding to the eigenvector $P$. In the paper \cite{mzv2025} it was proved that none of the Ricci tensors $\overset{0}{R}ic$, $\overset{4}{R}ic$, $\overset{5}{R}ic$, can be equal to zero.

	{
		Based on equation \eqref{eq:RicPX} the following statement can be proven.
		\begin{theorem}[Lemma 3 in \cite{li2023Ln}]
			On every $n$-dimensional GRW space-time $(\mathcal{M},g,\overset{1}{\nabla})$ holds
			\begin{equation}\label{eq:rGRW}
				P(\overset{g}{r})=2(n(n-1) - \overset{g}{r}),
			\end{equation}
			where $\overset{g}{r}$ denotes scalar curvature.
		\end{theorem}
	}

	\section{Perfect fluid space-time}
	
	
	If the Ricci tensor $\overset{g}{R}ic$ has the form
	\begin{equation}\label{eq:PF}
		\overset{g}{R}ic=ag+b\pi\otimes\pi,
	\end{equation}
	then a Lorentzian manifold is said to be a \textit{perfect fluid space-time}, where $a$ and $b$ are scalars. As we have already mentioned, a perfect fluid space-time is a special case of quasi-Einstein manifolds, because quasi-Einstein manifolds have a metric of arbitrary signature. Papers \cite{neill1983,dede2023,dede2024,gutierrez2009} give results on the relation of a perfect fluid space-time with RW and GRW space-time.
	
	Contraction of the previous equation gives 
	$$ \overset{g}{r} = a\,n - b.$$
	
	Based on the equation (\ref{eq:RicPX}) and (\ref{eq:PF}), it is easy to check that in a  perfect fluid space-time with a semi-symmetric metric $P$-connection, the relation (see Lemma 5. in \cite{li2023Ln}) 
	\begin{equation}\label{eq:a-b}
		a-b=n-1,
	\end{equation}
	holds, and by combining the previous two equations we get
	\begin{equation}\label{eqab}
		a=\frac{\overset{g}{r}}{n-1} - 1, \;\;\;  b= \frac{\overset{g}{r}}{n-1} - n,
	\end{equation}
	which means that the Ricci tensor $\overset{g}{R}ic$ can be written in the form
	\begin{equation*}
		\overset{g}{R}ic = \left(\frac{\overset{g}{r}}{n-1} - 1\right) g + \left(\frac{\overset{g}{r}}{n-1} - n \right)\pi\otimes\pi.
	\end{equation*}

{
		The equation \eqref{eqab} implies the following statement.
		
	\begin{theorem}\label{thm:scalarsPF}
			The scalar curvature of a perfect fluid space-time $(\mathcal{M},g,\overset{1}{\nabla})$  is constant if and only if any of the scalars $a$ or $b$ is constant.
		\end{theorem}

Differentiating equation \eqref{eq:PF} and using \eqref{eq:Ptorse-forming1}, gives
\begin{equation*}
	(\nabla_X \overset{g}{Q})Y = \frac{X(\overset{g}{r})}{n-1} (Y+\pi(Y)P)+ \big(\frac{\overset{g}{r}}{n-1} -n\big) (g(X,Y)P + 2\pi(X)\pi(Y)P + \pi(Y)X).
\end{equation*}
By contracting the previous equation with respect to $X$, we have
\begin{equation*}
	\frac{n-3}{2(n-1)}Y(\overset{g}{r}) = \frac{1}{n-1} (P(\overset{g}{r})+(n-1)(\overset{g}{r}-n(n-1)))\pi(Y),
\end{equation*}
where we used identity $2(\mathrm{div}Q)Y=Y(\overset{g}{r})$.  From the previous equation and \eqref{eq:rGRW} we get
\begin{equation*}
	Y(\overset{g}{r})=2(\overset{g}{r}-n(n-1))\eta(Y),
\end{equation*}
which proves the following theorem.
\begin{theorem}\label{thm:scalarcurvaturePF}
	The scalar curvature of a perfect fluid space-time $(\mathcal{M},g,\overset{1}{\nabla})$  is constant if and only if it has the form $\overset{g}{r}=n(n-1)$.
\end{theorem}
Theorems \ref{thm:scalarsPF} and \ref{thm:scalarcurvaturePF} imply the following statement.
\begin{corollary}\label{cor:mainPF}
	If any of the scalars $a$ or $b$ is constant then a perfect fluid space-time $(\mathcal{M},g,\overset{1}{\nabla})$ reduces to Einstein manifold.
\end{corollary}
\begin{proof}
	Indeed, if either of the scalars $a$ or $b$ is constant, then by Theorems \ref{thm:scalarsPF} and \ref{thm:scalarcurvaturePF} the scalar curvature $\overset{g}{r}$ is also constant of the form $\overset{g}{r}=n(n-1)$. Substituting this value into \eqref{eqab}, we obtain
	\begin{equation}\label{eq:mainconstPF}
		a=n-1, \quad b=0,
	\end{equation}
	which means that the Ricci tensor has the form $\overset{g}{R}ic=(n-1)g$, thus completing the proof.
\end{proof}

	}

	Since the Ricci tensors $\overset{0}{R}ic$, $\overset{4}{R}ic$, $\overset{5}{R}ic$ cannot vanish identically (see Theorem 4.5 in \cite{mzv2025}), we consider weaker conditions than the vanishing conditions. This leads to the definition of special classes of GRW space-times $(\mathcal{M},g,\overset{1}{\nabla})$ (see Definition 5.1 in \cite{mzv2025}).
	\begin{definition}
		A GRW space-time $(\mathcal{M},g,\overset{1}{\nabla})$ is a perfect fluid space-time of the $\theta$-th kind, $\theta=0,1,4,5$ if
		\begin{equation*}
			\overset{\theta}{R}ic=\overset{\theta}{a}g+\overset{\theta}{b}\pi\otimes\pi, \;\;\; \theta=0,1,4,5,
		\end{equation*}
		where $\overset{\theta}{a}$, $\overset{\theta}{b}$ are smooth functions.
	\end{definition}
	\begin{remark}
		Similar to Remark \ref{rem:Einsteintypemanifolds}, the previous definition is valid due to the symmetry of the Ricci tensors $\overset{\theta}{R}ic $ with respect to a  semi-symmetric metric $P$-connection. In the general case, for any non-symmetric connection one must take $sym\overset{\theta}{R}ic $ instead of $\overset{\theta}{R}ic $, due to the symmetry of the right-hand side of the previous equation. 
	\end{remark}
	
	By using the properties of Ricci tensors $\overset{\theta}{R}ic$, the following theorem can be proved.
	
	\begin{theorem}\cite{mzv2025}
		A GRW space-time $(\mathcal{M},g,\overset{1}{\nabla})$ is a perfect fluid space-time if and only if it is a perfect fluid space-time of the $\theta$-th kind, $\theta=0,1,4,5$.
	\end{theorem}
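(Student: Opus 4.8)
The plan is to read off the equivalence directly from the affine relations (\ref{eq:Ric0cssLn})–(\ref{eq:Ric5cssLn}), which, since the GRW values $\omega=1$ and $\pi(P)=g(P,P)=-1$ have already been inserted, all have the form
\[
\overset{\theta}{R}ic \;=\; \overset{g}{R}ic + \lambda_\theta\, g + \mu_\theta\, \pi\otimes\pi, \qquad \theta\in\{0,1,4,5\},
\]
with $\lambda_\theta,\mu_\theta$ constants depending only on $n$ (for instance $\lambda_0=-(n-1)$, $\mu_0=-\tfrac{n-1}{4}$, and similarly for $\theta=1,4,5$ from (\ref{eq:RicbetacssLn}), (\ref{eq:Ric4cssLn}), (\ref{eq:Ric5cssLn})). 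Because the right-hand side is obtained from $\overset{g}{R}ic$ by adding a fixed element of the span of $\{g,\pi\otimes\pi\}$, membership in that span — which is precisely what "being of perfect-fluid type" means — is preserved in both directions, and this is the whole content of the statement.

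For the forward implication, assume $(\mathcal{M},g,\overset{1}{\nabla})$ is a perfect fluid space-time, so $\overset{g}{R}ic = a g + b\,\pi\otimes\pi$ with $a,b$ scalar functions. Substituting into the displayed relation gives, for every $\theta\in\{0,1,4,5\}$,
\[
\overset{\theta}{R}ic \;=\; (a+\lambda_\theta)\, g + (b+\mu_\theta)\, \pi\otimes\pi,
\]
and since $\lambda_\theta,\mu_\theta$ are constants, $a+\lambda_\theta$ and $b+\mu_\theta$ are again smooth functions; hence $(\mathcal{M},g,\overset{1}{\nabla})$ is a perfect fluid space-time of the $\theta$-th kind. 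For the converse, fix any $\theta\in\{0,1,4,5\}$ and suppose $\overset{\theta}{R}ic = \overset{\theta}{a}\, g + \overset{\theta}{b}\, \pi\otimes\pi$; solving the same relation for $\overset{g}{R}ic$ yields $\overset{g}{R}ic = (\overset{\theta}{a}-\lambda_\theta)\, g + (\overset{\theta}{b}-\mu_\theta)\, \pi\otimes\pi$, which is the perfect-fluid form. Thus the equivalence holds for each individual kind, and in particular simultaneously for all $\theta\in\{0,1,4,5\}$.

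There is no genuine difficulty here; the argument is a one-line linear manipulation once (\ref{eq:Ric0cssLn})–(\ref{eq:Ric5cssLn}) are in hand. The only point deserving a word of care is to confirm that the coefficients $\lambda_\theta,\mu_\theta$ really are constants (not merely functions on $\mathcal{M}$), which they are precisely because $\omega$ and $\pi(P)$ have been evaluated to the constants $1$ and $-1$ on a GRW space-time $(\mathcal{M},g,\overset{1}{\nabla})$ by Theorems \ref{thm:ssmPcon} and \ref{cor:CSSGRW} together with the surrounding discussion; this constancy is what guarantees that the smoothness of the $a,b$-coefficients is transported unchanged across the equivalence, and it is also what makes the converse work starting from any single value of $\theta$.
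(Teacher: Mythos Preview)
Your proof is correct and is precisely the argument the paper intends: it cites \cite{mzv2025} and only remarks that the theorem follows ``by using the properties of Ricci tensors $\overset{\theta}{R}ic$'', i.e.\ the relations (\ref{eq:Ric0cssLn})--(\ref{eq:Ric5cssLn}), which you have unpacked exactly. One minor overstatement: the constancy of $\lambda_\theta,\mu_\theta$ is not actually needed for the equivalence---even smooth-function shifts in $\mathrm{span}\{g,\pi\otimes\pi\}$ would preserve the perfect-fluid form---so your closing caveat is harmless but unnecessary.
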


{
	
	In contrast to the previous definition, in Section \ref{sec:quasi-Einstein} we set slightly stricter conditions for the Ricci tensors $\overset{0}{R}ic$,$\overset{4}{R}ic$, $\overset{5}{R}ic$, i.e. we observed the case when they are proportional to the metric $g$, and we defined Einstein type manifold of the $\theta$-th kind. A GRW space-time $(\mathcal{M},g,\overset{1}{\nabla})$ of Einstein type of the $\theta$-th kind, $\theta=0,4,5$, is actually a special case of a perfect fluid space-time of the  $\theta$-th kind, for
	\begin{equation*}
		\overset{\theta}{a}=\frac{\overset{\theta}{r}}{n} \; \;\; \mbox{and} \; \; \; \overset{\theta}{b}=0.
	\end{equation*}

For example, if the manifold is Einstein type of the zeroth kind, then from (\ref{thm:Ajns0}) we have that a perfect fluid space-time has the Ricci tensor of the form
\begin{equation}\label{eq:RicAjn02}
	\overset{g}{R}ic  = \frac{1}{4n} (4\overset{g}{r} + n-1)g  + \frac{n-1}{4} \pi\otimes\pi.
\end{equation}
Further, taking into consideration equations (\ref{eq:PF}) and (\ref{eq:mainconstPF}), we get relation
\begin{equation*}
 \frac{n-1}{4} = 0,
\end{equation*}
which is in contradiction with $n\geq3$, and as a result we can formulate the following theorem.
	\begin{theorem}\label{thmcannotEtype}
	An $n$-dimensional $(n\geq3)$ GRW space-time $(\mathcal{M},g,\overset{1}{\nabla})$ cannot be an Einstein type of the $\theta$-th kind, $\theta=0,4,5$.
\end{theorem}
}

	\section{Theory of relativity}
	
	In order to apply the previous results to the theory of relativity, we will consider Einstein field equations with cosmological constant $\Lambda$ which read
	\begin{equation}\label{eq:EFEL}
		\overset{g}{R}ic - \frac{\overset{g}{r}}{2} g + \Lambda g = k\tau,
	\end{equation}
	where $\tau$ is \textit{energy-momentum tensor} (of type $(0,2)$) and $k$ is \textit{gravitational constant}. A energy-momentum tensor of a perfect fluid space-time has the form
	\begin{equation}\label{eq:energy-momentum}
		\tau = \rho g + (\sigma+p) \pi \otimes \pi,
	\end{equation}
	where $\sigma$ is the \textit{energy density} and $p$ is the \textit{isotropic pressure}, where $\sigma+p\ne 0$ and $\sigma>0$ (see pp. 61-63 in \cite{duggal1999} or pp. 61. in \cite{stephani2009}). 
	

	
	Given that the divergence of the left hand side of the equality (\ref{eq:EFEL}) is equal to zero, the same should be true for the right hand side of that equality. For the divergence of the energy-momentum tensor (\ref{eq:energy-momentum}) of a perfect fluid space-time, we have
	\begin{equation}\label{eq:divtau}
		\mathrm{div}\tau = (\sigma+p)\mathrm{div}\pi\otimes\pi.
	\end{equation}
	
	However, taking into account the equation (\ref{eq:Ptorse-forming}), which is valid for a semi-symmetric metric $P$-connection, we further have
	\begin{equation*}
		\mathrm{div} \pi\otimes\pi = (n-1)\pi,
	\end{equation*}
	which means that the equation (\ref{eq:divtau}) is equal to zero if and only if $\sigma+p=0$, from where follows the equation of state $\frac{p}{\sigma}=-1$, which is the limit value for \textit{phantom dark energy}.
	\begin{theorem}
		In a perfect fluid space-time $(\mathcal{M},g,\overset{1}{\nabla})$ which satisfies the Einstein field equations with the cosmological constant, the equation of state represents a phantom barrier.
	\end{theorem}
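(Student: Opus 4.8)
The plan is to turn the Einstein field equations \eqref{eq:EFEL} into a pointwise algebraic constraint on the energy density $\sigma$ and the isotropic pressure $p$ by exploiting the fact that the left-hand side of \eqref{eq:EFEL} is divergence-free. First I would invoke the twice-contracted second Bianchi identity for the Levi-Civita connection, which says that the Einstein gravitational tensor $\overset{g}{R}ic-\tfrac{1}{2}\overset{g}{r}\,g$ has vanishing divergence; adding $\mathrm{div}(\Lambda g)=0$ (the cosmological constant is constant and $\overset{g}{\nabla}g=0$) and taking the divergence of both sides of \eqref{eq:EFEL} gives $k\,\mathrm{div}\tau=0$, hence $\mathrm{div}\tau=0$ because $k\neq0$.

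Second, I would compute $\mathrm{div}\tau$ for the perfect fluid energy-momentum tensor \eqref{eq:energy-momentum}. The geometric input is the torse-forming law \eqref{eq:Ptorse-forming} (equivalently \eqref{eq:Ptorse-forming1}), which holds because on a GRW space-time $(\mathcal{M},g,\overset{1}{\nabla})$ the generator $P$ is a unit timelike field and $\omega=1$; from $(\overset{g}{\nabla}_X\pi)(Y)=g(X,Y)+\pi(X)\pi(Y)$ and $g(P,P)=-1$ one obtains, by a short orthonormal-frame computation, $\mathrm{div}(\pi\otimes\pi)=(n-1)\pi$. Feeding this into the divergence formula \eqref{eq:divtau}, namely $\mathrm{div}\tau=(\sigma+p)\,\mathrm{div}(\pi\otimes\pi)$, produces $\mathrm{div}\tau=(n-1)(\sigma+p)\,\pi$.

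Third, I would combine the two facts. Since $n\geq3$ and $P\neq\vec{0}$, the $1$-form $\pi$ does not vanish identically, so the equation $(n-1)(\sigma+p)\,\pi=0$ forces $\sigma+p=0$ on $\mathcal{M}$. As $\sigma>0$ by the standing physical hypotheses on $\tau$, dividing by $\sigma$ yields the equation of state $p/\sigma=-1$. This is precisely the phantom divide line between quintessence ($-1<p/\sigma<-\tfrac{1}{3}$) and phantom dark energy ($p/\sigma<-1$) --- the phantom barrier --- which is the assertion.

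I do not expect a genuine obstacle here: once \eqref{eq:divtau} and $\mathrm{div}(\pi\otimes\pi)=(n-1)\pi$ are available, the theorem is an immediate consequence of energy-momentum conservation. The only step that needs care is the verification of $\mathrm{div}(\pi\otimes\pi)=(n-1)\pi$, where one has to use the full torse-forming equation $\overset{g}{\nabla}_XP=X+\pi(X)P$ and keep track of the signs coming from the Lorentzian signature; it is exactly this $P$-connection structure, rather than a generic perfect fluid, that makes all terms in $\mathrm{div}\tau$ other than $(\sigma+p)\,\mathrm{div}(\pi\otimes\pi)$ drop out and reduces the computation to \eqref{eq:divtau}.
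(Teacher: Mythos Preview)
Your proposal is correct and follows essentially the same route as the paper: use the contracted Bianchi identity to pass from \eqref{eq:EFEL} to $\mathrm{div}\tau=0$, invoke \eqref{eq:divtau}, compute $\mathrm{div}(\pi\otimes\pi)=(n-1)\pi$ from the torse-forming law \eqref{eq:Ptorse-forming1}, and conclude $\sigma+p=0$. Your write-up is in fact slightly more explicit than the paper's in justifying why $\pi\neq0$ and $n\geq3$ force the conclusion.
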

	
	This theorem can be written in the following equivalent form as well.
	\begin{theorem}
		Let a perfect fluid space-time with a unit timelike torse-forming vector field	 (\ref{eq:Ptorse-forming1}) satisfies the Einstein field equations with the cosmological constant. Then the equation of state represents a phantom barrier.
	\end{theorem}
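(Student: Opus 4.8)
The plan is to deduce the statement from the theorem immediately preceding it by showing that the two hypotheses describe exactly the same space-times. So first I would verify that a Lorentzian manifold ($n\geq 3$) carrying a unit timelike vector field $P$ with $\overset{g}{\nabla}_{X}P=X+\pi(X)P$, where $\pi(\cdot)=g(\cdot,P)$, is automatically a GRW space-time equipped with a semi-symmetric metric $P$-connection. From (\ref{eq:Ptorse-forming1}) a one-line computation gives $(\overset{g}{\nabla}_{X}\pi)(Y)=g(X,Y)+\pi(X)\pi(Y)$, i.e. condition (\ref{eq:conditionforconcircularmapping}) holds with $\omega=1$; since $g(P,P)=\pi(P)=-1$ this is precisely the relation $\omega=-g(P,P)$, so the semi-symmetric metric connection (\ref{eq:SSmc}) built from the covector $\pi$ is a concircularly semi-symmetric metric connection which, by Theorem \ref{thm:ssmPcon}, is a semi-symmetric metric $P$-connection, and by Theorem \ref{cor:CSSGRW} the underlying manifold is a GRW space-time. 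Hence a perfect fluid space-time with torse-forming field (\ref{eq:Ptorse-forming1}) is nothing but a perfect fluid GRW space-time $(\mathcal{M},g,\overset{1}{\nabla})$, and the conclusion about the equation of state is inherited verbatim from the previous theorem.

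For completeness I would also run the direct argument. Taking the divergence of (\ref{eq:EFEL}), the contracted second Bianchi identity kills $\mathrm{div}(\overset{g}{R}ic-\frac{\overset{g}{r}}{2}g)$ and $\mathrm{div}(\Lambda g)=0$ since $\Lambda$ is constant, so $\mathrm{div}\tau=0$. Substituting the perfect-fluid form (\ref{eq:energy-momentum}) and using (\ref{eq:Ptorse-forming1}) — which yields $\mathrm{div}P=n-1$, $\overset{g}{\nabla}_{P}P=0$, and consequently $\mathrm{div}(\pi\otimes\pi)=(n-1)\pi$ — one is left with $\mathrm{div}\tau=(\sigma+p)(n-1)\pi$. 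Since $n\geq 3$ and $\pi\ne 0$, this forces $\sigma+p=0$, i.e. the equation of state $p/\sigma=-1$, the phantom barrier separating quintessence from phantom dark energy.

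The step requiring the most care is the divergence of the energy-momentum tensor: one must check that the terms coming from the gradients of the fluid scalars do not contribute to the component transverse to $P$, which is exactly where the geodesic property $\overset{g}{\nabla}_{P}P=0$ of the flow (a consequence of (\ref{eq:Ptorse-forming1})) is used. Everything else — the Bianchi identity, the constancy of $\Lambda$, and the elementary identity $\mathrm{div}(\pi\otimes\pi)=(n-1)\pi$ obtained by contracting (\ref{eq:Ptorse-forming1}) — is routine, so the real content of the proof is the reduction in the first paragraph, or equivalently that single divergence computation.
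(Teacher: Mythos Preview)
Your first paragraph is exactly how the paper handles this statement: it is presented there merely as an equivalent rewording of the preceding theorem, and your verification that hypothesis (\ref{eq:Ptorse-forming1}) gives $\omega=1=-g(P,P)$ and hence, via Theorems \ref{thm:ssmPcon} and \ref{cor:CSSGRW}, the semi-symmetric metric $P$-connection on a GRW space-time makes that equivalence explicit. Your supplementary direct computation in the second paragraph then mirrors the paper's divergence argument for the preceding theorem essentially line for line, including the same simplification $\mathrm{div}\tau=(\sigma+p)\,\mathrm{div}(\pi\otimes\pi)$.
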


	\section{Conclusions}
	
	In the paper, we continue with the research of concircularly semi-symmetric metric connection and examine the properties of its generator. After proving that the strong energy condition is violated in a perfect fluid space-time with a semi-symmetric metric $P$-connection that satisfies the Einstein field equation without a cosmological constant \cite{mzv2025}, in the paper we have shown that the equation of state of a perfect fluid space-time with a semi-symmetric metric $P$-connection that satisfies the Einstein field equation with a cosmological constant represents a phantom barrier. It will be interesting to examine the application of the mentioned connections to various other modifications of the theory of relativity, such as the generalized Einstein field equations (e.g. \cite{csillag2024}), which would extend the application of the observed connection to the theory of relativity.

	Theorem \ref{thmcannotEtype} is a correction of the results from section 2.3.3 in the first author's PhD thesis \cite{mPhD2025}.
	
	
	\vskip0.5cm
	
	\section*{Statements and Declarations}

\noindent {\bf Funding} This work was partially supported by the Ministry of Education, Science and Technological Development of the Republic of Serbia (contract reg. no. 451-03-65/2025-03/200123 for Miroslav D. Maksimovi\' c and Marija S. Najdanovi\' c, and contract reg. no. 451-03-137/2025-03/200124  for Milan Lj. Zlatanovi\' c), as well as by the Bulgarian Ministry of Education and Science under the Scientific Programme "Enhancing the Research Capacity in Mathematical Sciences (PIKOM)", No. DO1-67/05.05.2022 (for Milan Lj. Zlatanovi\' c and Miroslav D. Maksimovi\' c).

\vskip0.5cm

\noindent	{\bf Author contributions} All authors contributed equally. They read and approved the final manuscript.

\vskip0.5cm
\noindent	{\bf Declarations}
\vskip0.5cm
\noindent {\bf Conflicts of Interest}	 The author declares no conflicts of interest.

\vskip0.5cm

\noindent {\bf Data Availability Statement}
The authors declare that no associated data is
included.

	Miroslav D. Maksimovi\'c \\
	{University of Pri\v stina in Kosovska Mitrovica, Faculty of Sciences and Mathematics, Department of Mathematics, Kosovska Mitrovica, Serbia, and \\
		Institute of Mathematics and Informatics, Bulgarian Academy of Sciences, Sofia 1113, Acad. G. Bonchev Str., Bl. 8, Bulgaria}
	\\
	email: miroslav.maksimovic@pr.ac.rs
	\\

	Milan Lj. Zlatanovi\'c \\
	{University of Ni\v s, Faculty of Sciences and Mathematics, Department of Mathematics, Ni\v s, Serbia,}
	\\
	email: zlatmilan@yahoo.com
	\\
	
	Marija S. Najdanovi\'c \\
	{University of Pri\v stina in Kosovska Mitrovica, Faculty of Sciences and Mathematics, Department of Mathematics, Kosovska Mitrovica, Serbia,}
	\\
	email: marija.najdanovic@pr.ac.rs
	\\

\end{document}